\newtheorem{theorem}[subsection]{Theorem}
\newtheorem{proposition}[subsection]{Proposition}
\newtheorem{corollary}[subsection]{Corollary}
\newtheorem{lemma}[subsection]{Lemma}
\theoremstyle{definition}
\newtheorem{remark}[subsection]{Remark}
\newtheorem{example}[subsection]{Example}
\numberwithin{equation}{subsection}
\begin{document}

\title {$\ell$-adic Tautological Systems}
\thanks{We thank Quentin Guignard and Haoyu Hu for helpful discussions. 
The main part of this work was done while Lei Fu visited the Center of Mathematical Sciences and Applications (CMSA) at Harvard University
in 2015. He would like to thank CMSA for the hospitality. The research of Lei Fu is supported by NSFC 11531008.}

\author{Lei Fu, An Huang, Bong Lian, Shing-Tung Yau, Dingxin Zhang, Xinwen Zhu}
\address{Yau Mathematical Sciences Center, Tsinghua University, Beijing 100084, P. R. China}
\email{leifu@math.tsinghua.edu.cn}
\address{Department of Mathematics, Brandeis University, Waltham, MA 02454, USA}
\email{anhuang@brandeis.edu}
\address{Department of Mathematics, Brandeis University, Waltham, MA 02454, USA}
\email{lian@brandeis.edu}
\address{Department of Mathematics, Harvard University, Cambridge, MA 02138, USA}
\email{yau@math.harvard.edu}
\address{Yau Mathematical Sciences Center, Tsinghua University, Beijing 100084, China}
\email{zhangdingxin03@gmail.com}
\address{Department of Mathematics, California Institute of Technology, Pasadena, CA 91125, USA}
\email{xzhu@caltech.edu}

\date{}

\begin{abstract}
Tautological systems was introduced in \cite{LY} as the system of differential equations satisfied by period integrals 
of hyperplane sections of some complex projective homogenous varieties. 
We introduce the $\ell$-adic tautological systems for the case where
the ground field is of characteristic $p$. 

\medskip
\noindent {\bf Key words:} multiplicative sheaf;
Deligne-Fourier transform.

\medskip
\noindent {\bf Mathematics Subject Classification:} 14F20.
\end{abstract}

\maketitle

\section* {Introduction}

Let $G$ be an algebraic group over $\mathbb C$,  $\mathfrak g$ the Lie algebra of $G$, 
$\beta: \mathfrak g\to \mathbb C$ a character,  
$V$ a finite dimensional complex representation of $G$, and $\hat X$ a closed subscheme of $V$
invariant under the action of $G$. For any $\theta\in\mathfrak g$, 
let $L_\theta$ be the vector field on $V$ defined by
$$L_\theta(x)=\frac{d}{dt}\Big|_{t=0}\Big(\exp(t\theta)\cdot x\Big)$$ for any $x\in V$. Similarly, we can define a vector field 
$L_\theta$ on the dual space $V^\vee$ provided with the contragredient action by $G$. 
Fix a basis $\{x_1,\ldots, x_N\}$ on $V$. Let $\{\xi_1,\ldots, \xi_N\}$ be the dual basis on the dual space $V^\vee$, 
and let $I(\hat X)$ be the ideal of of $\mathbb C[\xi_1, \ldots, \xi_N]$ consisting of functions vanishing on $\hat X$.  
The \emph{tautological system} $\tau(G,\beta,V, \hat X)$ is  the following system of differential equations for functions
$f(x_1, \ldots, x_N)$ on $V^\vee$:
\begin{eqnarray*}
&& L_\theta f=\beta(\theta) f,\\
&& g\Big(\frac{\partial}{\partial x_1},\ldots, \frac{\partial}{\partial x_N}\Big) f=0,
\end{eqnarray*}
where the first equation is taken for all $\theta\in\mathfrak g$, and the second equation is taken for all $g(\xi_1,\ldots, \xi_N)\in I(\hat X)$. 
In \cite{LY}, it is proved that under some conditions, the tautological systems are satisfied by period integrals 
of hyperplane sections of some complex projective homogenous varieties. 
The $D$-module corresponding to this systems is introduced in \cite{H}.  
As $f(x_1, \ldots, x_N)$ is a function on $V^\vee$, the vector field $L_\theta$ in the first equation is the vector field on $V^\vee$ for the contragredient 
action of $G$ on $V^\vee$. This equation can be written as 
$$\frac{d}{dt}f(\exp(t\theta)\cdot \xi)=\beta(\theta) f(\exp(t\theta)\cdot \xi)$$ for any $\theta\in\mathfrak g$ and $\xi\in V^\vee$.  
Solving this ordinary differential equation, we get 
$$f(\exp(t\theta)\cdot\xi )=e^{\beta (\theta)t} f(\xi),$$ 
that is, $f$ is homogeneous of weight $\beta$ with respect to the group action of $G$ on $V^\vee$. Suppose $G$ is connected. Then
$e^{\theta}$ ($\theta\in \mathfrak g$) generate a dense subgroup of $G$. Suppose furthermore that $f$ is continuous. Then for any $\xi\in V^\vee$, 
the restriction of $f$ to the closure of the orbit $G\xi$ is uniquely 
determined by the value $f(\xi)$.  

Let $\rho: \mathfrak g\to 
\mathfrak {gl}(V)$ be the Lie algebra homomorphism defined by the representation $V$ of $G$, and write 
$$\rho(\theta)(x_i)=\sum_{j=1}^N \theta_{ij}x_j.$$
Then the equation $L_\theta f=\beta(\theta) f$ can be written as 
$$\sum_{i, j}\theta_{ji}x_i\frac{\partial f}{\partial x_j} +\beta(\theta)f=0.$$
Define formally the Fourier transform $\hat f$ of $f$ by 
$$\hat f(\xi_1, \ldots, \xi_N)=\int f(x_1, \ldots, x_N)e^{-(x_1\xi_1+\cdots+x_N\xi_N)}dx_1\cdots dx_N.$$ Formally we have 
$$\widehat {(x_i f)}=-\frac{\partial}{\partial\xi_i}\hat f, \quad \widehat{\Big(\frac{\partial}{\partial x_i} f\Big)}=\xi_i \hat f.$$
Taking the Fourier transform of the tautological system, we get
\begin{eqnarray*}
&& \sum_{i, j} \theta_{ji}\xi_j\frac{\partial \hat f}{\partial \xi_i} +(\mathrm{Tr}(\rho(\theta))-\beta(\theta))\hat f=0 \hbox{ for all } \theta\in \mathfrak g,\\
&&g(\xi_1,\ldots, \xi_N) \hat f=0 \hbox { for all } g\in I(\hat X).
\end{eqnarray*}
The first system of equations is just $$L_\theta \hat f= (\beta(\theta)-\mathrm{Tr}(\rho(\theta)))\hat f \hbox{ for all } \theta\in \mathfrak g,$$ and
it means that $\hat f$ is homogeneous of weight $\beta-\mathrm{Tr}(\rho)$ with respect to the action of $G$ on $V$. The 
second system of equations means that $\hat f$ is supported in $\hat X$. For a rigorous treatment via the Fourier transform, see \cite[\S 3]{FGKZ}.
By the Fourier inversion formula, to get a solution for the tautological system, we may start with a function on $V$ supported on $\hat X$ and homogenous 
of weight $\beta-\mathrm{Tr}(\rho)$ with respect to the action of $G$, and take its inverse Fourier transform. This is the method that we are going to use. 
In this paper, we work out a theory of tautological systems when the ground field is of characteristic $p$, a problem posed in 
\cite[\S 10]{LY}. In characteristic $p$, differential equations or $D$-modules do not behave well. Instead, we work with $\ell$-adic local systems, and more generally
objects in the derived category of $\overline{\mathbb Q}_\ell$-sheaves. The idea is that the tautological system is the Deligne-Fourier 
transform of a homogeneous object.

\medskip
\begin{remark} Let $G_0$ be an algebraic group acting on a smooth projective variety $X$ of dimension $d$,  
$\mathcal L$ a very ample $G_0$-linearized invertible sheaf, and
$V=\Gamma(X,\mathcal L)^\vee$. Then $G_0$ acts on $V$. Let $G=G_0\times\mathbb G_m$, let
$\mathbb G_m$ act trivially on $X$, and act on $V$ by scalar multiplication. We have a $G$-equivariant embedding $X\to \mathbb P(V)$.  
Let $\hat X$ be the cone over $X$, and let $\beta:\mathfrak g_0\oplus \mathbb C\to \mathbb C$ be a character, where $\mathfrak g_0$ is 
the Lie algebra of $G_0$. The tautological system $\tau(G,\beta, V, \hat X)$
\begin{eqnarray*}
&& L_\theta f =\beta(\theta) f \quad (\theta\in\mathfrak g_0\oplus \mathbb C),\\
&& g(\frac{\partial}{\partial x_1},\ldots, \frac{\partial}{\partial x_1}) f=0 \quad (g \hbox{ lies in the ideal sheaf of } \hat X),
\end{eqnarray*}
is studied in detail \cite{HLZ, LY}. Under certain conditions, period integrals for the family of hyperplane sections of $X$ are solutions of this system. 
More precisely, let $B$ be the Zariski 
open subset of $V^\vee=\Gamma(X,\mathcal L)$ such that for any $\phi\in B$, the hypersurface 
$Y_\phi=\{\phi=0\}$ of $X$ is smooth, let $$\mathcal Y=\{(x,\phi)\in X\times B|\phi(x)=0\},$$  let $\pi:\mathcal Y\to B$ be the projection, 
let $\omega_X$ and $\omega_{Y_\phi}$ be the invertible sheaves of top degree holomorphic forms on 
$X$ and $Y_\phi$, respectively, and let $i:Y_\phi\to X$ be the close immersion. By the adjunction formula, we have 
$$i^\ast (\mathcal L+\omega_X)\cong \omega_{Y_\phi}.$$
We thus have a restriction map 
$$\mathbf R_\phi: H^0(X, \mathcal L+ \omega_X) \to H^0(Y_\phi, \omega_{Y_\phi}).$$ 
Let $\tau\in H^0(X,
\mathcal L+\omega_X)$ be an eigenvector for the action of $\mathfrak g_0$, that is, 
there exists a character $\beta_0:\mathfrak g_0\to \mathbb C$ with the property $$\theta\cdot \tau
=\beta_0(\theta)\tau$$ for any $\theta\in\mathfrak g_0$. Let $\beta:\mathfrak g_0\oplus \mathbb C\to \mathbb C$ be the character 
defined by $\beta(\theta,\lambda)=\beta_0(\theta)+\lambda$, and let 
$\gamma_\phi\in H_{d-1}(Y_\phi,\mathbb Z)$ be locally constant homology $(d-1)$-cycles. Then by \cite[Theorem 8.8]{LY}, the period integral 
$$\int_{\gamma_\phi} \mathbf R_\phi(\tau)$$ as a function of $\phi$ is a solution of the tautological system $\tau(G,\beta, V, \hat X)$, 
provided that there exists
a principal $G$-equivariant $H$-bundle $M\to X$ with a CY-structure and a character $\chi: H\to \mathbb C^\ast$ such that 
$\mathcal L$ is the invertible sheaf corresponding to 
the line bundle $M\times_H \mathbb C\to X$, where $H$ acts on the factor $\mathbb C$ through the character $\chi$.
Here a CY-structure is a nowhere vanishing holomorphic form $\omega_M$ on $M$ 
of top degree which is an eigenvector of the action of $H$, that is $h\cdot \omega_M=\chi_M(h)\omega_M$ for some 
character $\chi_M:H\to \mathbb C^\ast$. 
\end{remark}

We first construct the homogenous objects. 
Let $k$ be a field, and let $\ell$ be a prime number distinct from the characteristic of $k$. For any
$k$-scheme $S$ of finite type, let $D_c^b(S,\overline{\mathbb Q}_\ell)$ be the derived category of $\overline{\mathbb Q}_\ell$-sheaves
on $S$.
Let $G'$ be a commutative algebraic group defined over $k$, and let 
$$m:G'\times_k G'\to G', \quad p_1:G'\times_k G'\to G', \quad p_2:G'\times_k G'\to G'$$ be the multiplication on $G'$ and the projections. 
A \emph{multiplicative sheaf} on $G'$ is a rank $1$ lisse $\overline{\mathbb Q}_\ell$-sheaf $\mathcal L$ together with an 
isomorphism $$\theta: p_1^* \mathcal L\otimes p_2^*\mathcal L\stackrel\cong\to m^*\mathcal L$$ such that the following conditions hold:

(1) Symmetry: Let $\sigma: G'\times_k G'\to G'\times_k G'$ and $\sigma': p_1^*\mathcal L\otimes p_2^*\mathcal L
\to p_2^*\mathcal L\otimes p_1^*\mathcal L$ be the morphisms defined by permuting factors. The following diagram commutes:
$$\begin{array}{ccccc}
 p_1^*\mathcal L\otimes p_2^*\mathcal L&&\stackrel\theta\to&& m^*\mathcal L\\
{\scriptstyle \sigma'} \downarrow&&&&\uparrow{\scriptstyle \cong} \\
p_2^*\mathcal L\otimes p_1^*\mathcal L&\cong& \sigma^*(p_1^*\mathcal L\otimes p_2^*\mathcal L)&
\stackrel{\sigma^*(\theta)}\to& \sigma^* m^* \mathcal L
\end{array}$$

(2) Associativity: Let $q_i:G'\times_k G'\times_k G'\to G'$ (resp. $q_{ij}:G'\times_k G'\times_k G'\to G'\times_kG'$) be the projections 
to the $i$-th factor (resp. $(i,j)$-th factor) for any $1\leq i\leq 3$ (resp. $1\leq i<j\leq 3$), and let 
$m_3: G'\times_k G'\times_k G'\to G'$ be the multiplication. The following diagram commutes:
$$\begin{array}{ccc}
q_1^*\mathcal L\otimes q_2^*\mathcal L\otimes q_3^*\mathcal L
&\stackrel{q_{12}^*(\theta)\otimes\mathrm{id}}\to& q_{12}^*m^*\mathcal L\otimes q_3^*\mathcal L\\
\scriptstyle{\mathrm{id} \otimes q_{23}^*(\theta)}\downarrow&&\downarrow \scriptstyle {(m q_{12}\times q_3)^* (\theta)}\\
q_1^*\mathcal L\otimes q_{23}^*m^*\mathcal L &\stackrel{(q_1\times mq_{23})^*(\theta)}\to&m_3^* \mathcal L.
\end{array}$$
The isomorphism $\theta$ induces an isomorphism 
$$p_1^* \mathcal L\otimes p_2^*\mathcal L\otimes m^*\mathcal L^{-1}\cong\overline{\mathbb Q}_\ell.$$
Restricting this isomorphism to $(e, e)$, where $e:\mathrm{Spec}\,k\to G'$ is the unit section of $G'$, we get an isomorphism
$$e^* \mathcal L\cong \overline{\mathbb Q}_\ell.$$ 
We can obtain multiplicative $\overline{\mathbb Q}_\ell$-sheaf on $G'$ is as follows: Let 
$$0\to N_i \to G'_i\to G'\to 0\quad (i\in I)$$ be the inverse system of all extensions of $G'$ by finite discrete abelian groups 
$N_i$. Note that each $G'_i\to G'$ is a Galois \'etale isogeny with Galois group $N_i$. We may regard the projective system
$\{G'_i\}_{i\in I}$ as the universal covering space classifying \'etale isogenies of $G'$. Set 
$$\pi_1^{\mathrm{isogeny}}(G')=\varprojlim_{i\in I} N_i.$$ 
It is a quotient of the \'etale fundamental group $\pi_1(G')$ of $G'$, and coincides with $\pi_1(G')$ if $k$ is algebraically closed of characteristic $0$. 
Let $\beta: \pi_1^{\mathrm{isogeny}}(G')\to \overline{\mathbb Q}_\ell^*$ be a character. It induces a character on the \'etale fundamental 
group of $G'$. Denote by $\mathcal L_\beta$ the corresponding $\overline{\mathbb Q}_\ell$-sheaf associated to this Galois representation. 
Then $\mathcal L_\beta$ is multiplicative. Any
multiplicative sheaf on $G'$ is of this form. See \cite[2.3.10]{MB} and \cite[2.9]{G} for a proof. 

Suppose $k=\mathbb F_q$ is a finite field with $q$ elements of characteristic $p$. Let $F$ be the Frobenius morphism, that is,  $F$ is identity on the 
underlying topological space of $G'$, and $F$ maps any section of $\mathcal O_G$ to its $q$-th power. Let 
$$L: G'\to G',\quad x\mapsto F(x)\cdot x^{-1}$$ be the Lang isogeny. 
Its kernel is the group $G'(\mathbb F_q)$ of $\mathbb F_q$-points of $G'$, and 
it is a Galois \'etale covering with Galois group $G'(\mathbb F_q)$. By \cite[VI \S 1 Proposition 6]{S}, any Galois isogeny over $G'$ is a quotient of the Lang isogeny. It follows that 
$$\pi_1^{\mathrm{isogeny}}(G')\cong G'(\mathbb F_q).$$ 
Let $\beta: G'(\mathbb F_q)\to \overline{\mathbb Q}_\ell^\ast$ be a character. The multiplicative sheaf 
$\mathcal L_\beta$ is called the \emph{Lang sheaf} associated to $\beta$. 
For any $x\in G'(\mathbb F_q)$, we have
$$(\mathrm{Frob}_x, \mathcal L_{\beta,\bar x})=\beta(x'),$$
where $\mathrm{Frob}_x$ is the geometric Frobenius element in $\pi_1(G')$.  

\begin{remark} Let $G'$ be a connected commutative complex algebraic group, let $\mathfrak g'$ be its Lie algebra, and let 
$\beta:\mathfrak g'\to\mathbb C$ be a character. The exponential map 
$$\mathfrak g'\to G', \quad \theta\mapsto \exp(\theta)$$ is an epimorphism. It is the universal covering space of $G'$, and its 
kernel is isomorphic to the fundamental group $\pi_1(G')$ of $G'$. Restricting $e^\beta$ to this kernel, we get a character
$e^\beta: \pi_1(G')\to \mathbb C^*$. The local system $\mathcal L_{e^\beta}$ associated to this character is a multiplicative sheaf. 
For any holomorphic function $f$ on $G'$ and any invariant vector field $\theta\in\mathfrak g$ on $G'$, let
$$\nabla_{\theta}(f)=\theta(f)-\beta(\theta)f.$$ Then $\nabla$ is an integrable connection on the trivial vector bundle over $G'$. 
Horizontal sections are those holomorphic functions $f$ such that 
$$\theta(f)=\beta(\theta)f,$$
that is, $f$ are homogenous of weight $\beta$. The multiplicative local system $\mathcal L_{e^\beta}$ is isomorphic to the local system of horizontal 
sections of this connection. 
\end{remark}

From now on, we assume $k$ is a perfect field of characteristic $p$ unless we state otherwise. 
Let $G$ be an algebraic group defined over $k$, and let $G\to \mathrm{GL}(V)$ be a representation, 
where $V$ is a finite dimensional vector space over $k$. 
By abuse of notation, we also denote 
by $V$ the affine $k$-scheme $\mathrm{Spec}(\mathrm{Sym}^\cdot (V^\vee))$. Let $v$ be a nonzero vector in 
$V$, and let $Q$ be the connected component of the stabilizer of $v$. 
Consider the morphism 
$$\iota: G/Q\to V, \quad gQ\mapsto gv.$$ 
It is quasi-finite. Let $G'=G/Q[G,G]$, which is a commutative algebraic group. Fix a multiplicative sheaf $\mathcal L_\beta$ 
on $G'$. Denote its inverse image by the canonical morphism $G/Q\to G'$ also by $\mathcal L_\beta$.
Note that $\iota_!\mathcal L_\beta$ and $R\iota_\ast\mathcal L_\beta$ are homogeneous objects in $D_c^b(V,
\overline{\mathbb Q}_\ell)$ supported in the $G$-invariant subset $\overline{Gv}$. By homogeneity, we mean that 
 $$\mu^\ast \iota_!\mathcal L_\beta\cong \mathcal L'_{\beta}\boxtimes \iota_!\mathcal L_\beta,\quad 
 \mu^\ast R\iota_\ast\mathcal L_\beta\cong \mathcal L'_{\beta}\boxtimes R\iota_\ast\mathcal L_\beta, $$
 where $\mu:G\times V\to V$ is the action of $G$ on $V$, and $\mathcal L'_{\beta}$ is the inverse image of $\mathcal L_\beta$ by
 the morphism $G\to G/Q$. (Confer the proof of Lemma \ref{equivariant}). 
Fixed a nontrivial additive character $\psi:\mathbb F_p\to\overline{\mathbb Q}_\ell^\ast$, and let 
$\mathscr F_\psi:D_c^b(V,\overline{\mathbb Q}_\ell)\to D_c^b(V^\vee, \overline{\mathbb Q}_\ell)$ be the Deligne-Fourier transform
defined by $\psi$. We define the \emph{$\ell$-adic tautological sheaves} to be 
\begin{eqnarray*}
\mathcal T_!(G, \beta, V, v,\psi)&=&\mathscr F_\psi(\iota_! \mathcal L_\beta[\mathrm{dim}\, G/Q]),\\
\mathcal T_{\ast}(G, \beta, V, v,\psi)&=&\mathscr F_\psi(R\iota_\ast \mathcal L_\beta[\mathrm{dim}\, G/Q]).
\end{eqnarray*}
Recall that for any vector bundle $E$ of rank $r$ over a $k$-scheme $S$ of finite type, the Deligne-Fourier transform is the functor 
$$\mathscr F_\psi:
D_c^b(E,\overline{\mathbb Q}_\ell)\to D_c^b(E^\vee,\overline{\mathbb Q}_\ell),\quad K\mapsto R\mathrm{pr}^\vee_!(\mathrm{pr}^\ast K\otimes 
\langle\,,\,\rangle^\ast \mathcal L_\psi)[r],$$ where $E^\vee\to S$ is the dual vector bundle of $E$,  $\mathrm{pr}:E\times_k E^\vee\to E$ 
and $\mathrm{pr}^\vee:E\times_kE^\vee\to E^\vee$ are the projections, and $\langle\,,\,\rangle:E\times_kE^\vee \to\mathbb A_S^1$ is the pairing, 
and $\mathcal L_\psi$ is the Artin-Schreier sheaf on $\mathbb A^1$ associated to the nontrivial additive character 
$\psi:\mathbb F_p\to\overline{\mathbb Q}_\ell^\ast$, that is, the pulling back to $\mathbb A_k^1$ of the Lang sheaf on 
the algebraic group $\mathbb A_{\mathbb F_p}^1$ associated to the character $\psi:\mathbb A^1(\mathbb F_p)\to\overline
{\mathbb Q}_\ell^\ast$. 
See \cite{L} for properties of the Fourier transform. For any $k$-scheme $a:S\to \mathrm{Spec}\,k$ of finite type,  
denote by $D_S:D_c^b(S,\overline{\mathbb Q}_\ell)\to D_c^b(S,\overline{\mathbb Q}_\ell)$ the Verdier dual functor $D_S=R\mathcal Hom(\cdot,Ra^! \overline{\mathbb Q}_\ell)$. 

\begin{proposition}  Let $n=\mathrm{dim}\, G/Q$ and $N=\mathrm{dim}\,V$.

(i) We have $D_{V^\vee}(\mathcal T_\ast(G, \beta, V, v,\psi))\cong \mathcal T_!(G, \beta^{-1}, V, v,\psi^{-1})(n+N)$ 

(ii) In the case where $k$ is a finite field, 
$\mathcal T_!(G, \beta, V, v,\psi)$ (resp. $D_{V^\vee}(\mathcal T_\ast(G, \beta, V, v,\psi))$ ) is mixed of weights $\leq   
n+N$ (resp. $\leq   -(n+N)$).
\end{proposition}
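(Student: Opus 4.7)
The plan is to deduce (i) from the standard compatibility of the Deligne--Fourier transform with Verdier duality, and to obtain (ii) by direct weight bookkeeping using Deligne's Weil~II estimates. The key inputs I want to use are: (a) Laumon's formula $D_{V^\vee}\mathscr F_\psi(K)\cong \mathscr F_{\psi^{-1}}(D_V(K))(N)$ for the interaction of the Fourier transform with Verdier duality on a rank $N$ vector bundle; (b) smoothness of the homogeneous space $G/Q$ of dimension $n$, so that dualizing a lisse rank-one sheaf on it is explicit; and (c) Deligne's theorem that $Rf_!$ preserves ``mixed of weights $\leq w$'' and that pullbacks and tensor products with weight-zero sheaves do the same.

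For (i), I first compute $D_V(R\iota_\ast\mathcal L_\beta[n])$. Since Verdier duality exchanges $R\iota_\ast$ with $R\iota_!$, and $\iota$ is quasi-finite so that $R\iota_!=\iota_!$, one has $D_V\circ R\iota_\ast\cong \iota_!\circ D_{G/Q}$. Because $G/Q$ is smooth of dimension $n$ and $\mathcal L_\beta$ is lisse of rank one with $\mathcal L_\beta^{\vee}\cong\mathcal L_{\beta^{-1}}$, we get $D_{G/Q}(\mathcal L_\beta)\cong \mathcal L_{\beta^{-1}}[2n](n)$, hence
\[
D_V(R\iota_\ast\mathcal L_\beta[n])\cong \iota_!\mathcal L_{\beta^{-1}}[n](n).
\]
Applying Laumon's formula to $K=R\iota_\ast\mathcal L_\beta[n]$ and combining the two Tate twists $(n)$ and $(N)$ then gives
\[
D_{V^\vee}\mathcal T_\ast(G,\beta,V,v,\psi)\cong \mathscr F_{\psi^{-1}}(\iota_!\mathcal L_{\beta^{-1}}[n])(n+N)=\mathcal T_!(G,\beta^{-1},V,v,\psi^{-1})(n+N),
\]
which is (i).

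For (ii), I would trace the weight bound through the definition $\mathcal T_!=\mathscr F_\psi(\iota_!\mathcal L_\beta[n])$. Since $\mathcal L_\beta$ comes from a character of the finite group $G'(\mathbb F_q)$, its Frobenius eigenvalues are roots of unity; hence $\mathcal L_\beta$ is pointwise pure of weight $0$. By Deligne's Weil~II, $\iota_!\mathcal L_\beta$ is mixed of weights $\leq 0$, and after the shift $[n]$ mixed of weights $\leq n$. The projection $\mathrm{pr}^\ast$ preserves weight bounds, tensoring with the weight-zero sheaf $\langle\,,\,\rangle^\ast\mathcal L_\psi$ preserves them, $R\mathrm{pr}^\vee_!$ preserves them (Deligne), and the final shift $[N]$ raises the bound by $N$, giving the desired $\leq n+N$ for $\mathcal T_!$. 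The assertion for $D_{V^\vee}\mathcal T_\ast$ is then immediate from (i): the Tate twist $(n+N)$ applied to a complex of weights $\leq n+N$ yields one of weights $\leq n+N-2(n+N)=-(n+N)$.

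The argument is essentially formal, so there is no substantive obstacle; the only point requiring care is the sign of the Tate twist in Laumon's duality formula, which I would verify by testing on $\overline{\mathbb Q}_\ell_V$ before committing to the shift $(n+N)$. The smoothness of $G/Q$, needed to identify $D_{G/Q}(\mathcal L_\beta)$, is standard once one knows that the quotient of a smooth algebraic group by a closed subgroup scheme is smooth.
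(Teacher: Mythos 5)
Your proof of (i) reproduces the paper's chain of isomorphisms essentially verbatim (Laumon's duality formula, the exchange $D_V\circ R\iota_\ast\cong\iota_!\circ D_{G/Q}$ using quasi-finiteness of $\iota$, and the computation $D_{G/Q}(\mathcal L_\beta[n])\cong\mathcal L_{\beta^{-1}}[n](n)$ from smoothness of $G/Q$), and your proof of (ii) is the paper's argument with the Fourier-transform weight estimate unpacked into its constituent steps rather than cited directly from Weil~II. Both parts are correct and follow the same route as the paper.
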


\begin{proof} 

(i) By \cite[1.3.2.2]{L}, we have 
\begin{eqnarray*}
D_{V^\vee}(\mathcal T_\ast(G, \beta, V, v,\psi))&=& D_{V^\vee}(\mathscr F_\psi(R\iota_\ast \mathcal L_\beta[n]))\\
&\cong& \mathscr F_{\psi^{-1}}(D_V(R\iota_\ast\mathcal L_\beta[n]))(N)\\
&\cong& \mathscr F_{\psi^{-1}} (\iota_! D_{G/Q}(\mathcal L_\beta[n]))(N)\\
&\cong& \mathscr F_{\psi^{-1}}(\iota_!\mathcal L_{\beta^{-1}}[n])(n+N)\\
&\cong&  \mathcal T_!(G, \beta^{-1}, V, v,\psi^{-1})(n+N).
\end{eqnarray*}

(ii)  $\iota_! \mathcal L_\beta[n]$ is mixed of weights $\leq n$. Its Deligne-Fourier transform 
$\mathcal T_!(G, \beta, V, v)$ is mixed of weights $\leq n+N$ by
 \cite[Th\'eor\`eme 3.3.1]{D}. 
Then by (i), $D_{V^\vee}(\mathcal T_\ast(G, \beta, V, v,\psi))$ is mixed of weights $\leq -(n+N)$. 
\end{proof}

\begin{example}\label{GKZ} Let $(w_{ij})$ be an $n\times N$ matrix of rank $n$ with integer entries.
Take $G$ to be the torus $G=\mathbb G_{m,k}^n$. Consider the representation $V$ of $G$ defined by the action
\begin{eqnarray*}
\mathbb G_{m,k}^n\times_k  \mathbb A_{k}^N &\to& \mathbb
A_{k}^N, \\
\Big((t_1,\ldots, t_n),(x_1,\ldots, x_N)\Big)&\mapsto&
(t_1^{w_{11}}\cdots t_n^{w_{n1}}x_1,\ldots, t_1^{w_{1N}}\cdots
t_n^{w_{nN}}x_N).
\end{eqnarray*}
The connected component of the stabilizer $v=(1,\ldots,1)\in V$ is trivial. 
Let $\hat X$ the the closure of the orbit of $v$. Then $\hat X$ is 
the closed subscheme of $\mathbb A_k^N$ defined by $\prod_{a_j>0}
{\xi_j}^{a_j}-\prod_{a_j<0}{\xi_j}^{-a_j}=0$, where $(a_1,\ldots,
a_N)\in\mathbb Z^N$ goes over the family of integral linear
relations
$$\sum_{j=1}^N
a_j{\mathbf w}_j=0$$ among ${\mathbf w}_1,\ldots, {\mathbf w}_N$ with ${\mathbf w}_j=(w_{1j},\ldots, w_{nj})\in\mathbb Z^n$. 

Suppose $k$ is the complex field $\mathbb C$. Let $\beta$ the character 
$$\beta:\mathbb C^n\to \mathbb C,\quad (\lambda_1,\ldots, \lambda_n)\mapsto \gamma_1\lambda_1+\cdots +\gamma_n\lambda_n,$$
where $\gamma_1,\ldots, \gamma_n\in \mathbb C$. 
The tautological system $\tau(G,\beta,V, \hat X)$ is exactly the 
Gelfand-Kapranov-Zelevinsky (GKZ) hypergeometric system 
\begin{eqnarray*}
&&\sum_{j=1}^N w_{ij} x_j\frac{\partial f}{\partial x_j}+\gamma_i
f=0 \quad (i=1,\ldots, n),\\ && \prod_{a_j>0}
\left(\frac{\partial}{\partial x_j}\right)^{a_j}f= \prod_{a_j<0}
\left(\frac{\partial}{\partial x_j}\right)^{-a_j}f
\end{eqnarray*} introduced in \cite{GKZ}. It is the system of differential equations satisfied by the exponential integral (period integral)
$$f(x_1, \ldots, x_n)=\int_{\sigma}t_1^{\gamma_1}\cdots t_n^{\gamma_n}e^{\sum_{j=1}^N x_j t_1^{w_{1j}}\cdots t_n^{w_{nj}}}\frac{dt_1}{t_1}\cdots
\frac{dt_n}{t_n},$$ where $\sigma$ is any real $n$-dimensional cycles in $(\mathbb C^*)^n.$

Suppose $k=\mathbb F_q$ is a finite field with $q$ elements. 
Let $\chi_1,\ldots, \chi_n:\mathbb F_q^\ast\to \overline{\mathbb Q}_\ell^\ast$ be multiplicative characters. 
They define a character $$\beta:{\mathbb G}_m^n(\mathbb F_q)\to \overline{\mathbb Q}_\ell^\ast,\quad (t_1,\ldots,t_n)\mapsto \chi_1(t_1)\cdots \chi_n(t_n).$$ 
The the tautological system $\mathcal T_!(G, \beta, V, v,\psi)$ is the GKZ hypergeometric sheaf studied in \cite{F}. 
For any $\mathbb F_q$-rational point $x=(x_1, \ldots, x_N)$ of $V^\vee$, we have 
$$\mathrm{Tr}(\mathrm{Frob}_x, \mathcal T_!(G, \beta, V, v,\psi)_{\bar x})=\sum_{t_1, \ldots, t_n\in \mathbb F_q^*} \chi_1(t_1)\cdots\chi_n(t_n)
\psi\Big(\mathrm{Tr}_{\mathbb F_q/\mathbb F_p}\Big(\sum_{j=1}^N x_j t_1^{w_{1j}}\cdots t_n^{w_{nj}}\Big)\Big).$$ 
The exponential sum on the righthand side is the arithmetic counterpart of the above exponential integral.
\end{example}

\medskip
We will give more examples of tautological systems in \S 2. In the following, we focus on the tautological systems constructed from
a quasi-projective homogenous variety embedded equivariantly in $\mathbb P(V)$.
More precisely, let $G_0$ be an algebraic group, let $G_0\to\mathrm{GL}(V)$ be a representation, and
let $G=G_0\times \mathbb G_m$. Let $G$ act on $V$ so that the factor $\mathbb G_m$ acts on $V$ by scalar multiplication. 
For any nonzero vector $x\in V$, denote by $[x]$ the corresponding point in 
the projective space $\mathbb P(V)=\mathrm{Proj}(\mathrm{Sym}(V^\vee))$, and denote by $\mathbb L_{[x]}$ the line
in $V$ spanned by $x$. Let $\mathbb L$ be the tautological line bundle over $\mathbb P(V)=\mathrm{Proj}(\mathrm{Sym}(V^\vee))$. 
The fiber of $\mathbb L$ at $[x]$ can be identified with the line $\mathbb L_{[x]}$. Note that $G$ acts on 
$\mathbb P(V)$ and on $\mathbb L$, and the factor 
$\mathbb G_m$ acts trivially on $\mathbb P(V)$ and acts by scalar multiplication on fibers of $\mathbb L$. Let $\mathbb L^\vee$ be the dual 
line bundle of $\mathbb L$. It is provided with the contragrediant action by $G$. The fiber of $\mathbb L^\vee$ over $[x]$ consists of 
linear functionals on the line 
$\mathbb L_{[x]}$.  The canonical morphisms $\mathbb L\to \mathbb P(V)$ and 
$\mathbb L^\vee\to \mathbb P(V)$ are $G$-equivariant. 

We have a canonical embedding  $$i:\mathbb L \to V\times_k \mathbb P(V)$$ of $\mathbb L$ into the trivial vector bundle 
$V\times_k\mathbb P(V)\to\mathbb P(V)$. 
The fiber of this morphism over 
$[x]\in\mathbb P(V)$ is the inclusion of the line $\mathbb L_{[x]}$ into $V$.  The transpose of $i$ is the evaluation morphism $$\mathrm{ev}:
V^\vee\times_k\mathbb P(V)\to \mathbb L^\vee.$$ For any $\phi\in V^\vee$, the fiber of $\mathrm{ev}$ over $[x]$ maps  
a point $(\phi, [x])$ in $V^\vee\times_k\mathbb P(V)$ to the restriction $\phi|_{\mathbb L_{[x]}}\in \mathbb L^\vee_{[x]}$ of $\phi$ to the subspace $\mathbb L_{[x]}$. 
Let ${\mathbb L}^\circ$ be the complement of the zero section of $\mathbb L$. 
We have an isomorphism $$V-\{0\}\stackrel\cong\to {\mathbb L}^\circ$$ which maps every nonzero vector $x$ to the point $x$ 
considered as an element in the line $\mathbb L_{[x]}$.   ${\mathbb L}^\circ$ is invariant under the action of $G$. 

Let $v\in V$ be a nonzero vector vector, let $Q\subset G$ be the connected component of the stabilizer of $v$, and let $P=Q\mathbb G_m$. Note that 
$P$ stabilizes $[v]\in \mathbb P(V)$. 
Regard $v$ as an element in $\mathbb L_{[v]}$. 
Let $v^\ast$ be the element in $(\mathbb L^\vee)_{[v]}$ define by $v^\ast(v)=1$. Note that $Q$ is also the connected component of the 
stabilizer of $v^\ast$. Let 
$$\mathbb L_{G/P}=\mathbb L\times_{\mathbb P^n(V)}G/P,\quad \mathbb L^\vee_{G/P}=\mathbb L^\vee\times_{\mathbb P^n(V)} G/P,\quad  
i_{G/P}:\mathbb L_{G/P}\to V\times_k G/P,\quad \mathrm{ev}_{G/P}: V^\vee\times_k G/P\to \mathbb L^\vee_{G/P}$$ the base changes of 
$\mathbb L$, $\mathbb L^\vee$, $i$ and $\mathrm{ev}$ respectively 
with respect to the morphism $$G/P\to \mathbb P(V),\quad gP\mapsto g[v].$$ 
Let $\kappa:G/Q\to \mathbb L_{G/P}^\circ$ (resp. $\kappa^\vee:G/Q\to \mathbb L_{G/P}^{\vee\circ}$) be the morphism $gQ\mapsto gv$ 
(resp. $gQ\mapsto gv^\ast$), where we regard $v$ (resp. $v^\ast$) as an element in  
the fiber $\mathbb L_{G/P,eP}\cong\mathbb L_{[v]}$ (resp.  ${\mathbb L}^\vee_{G/P,eP}\cong
{\mathbb L}^\vee_{[v]}$). 

\begin{lemma}\label{isomorphism} $\kappa: G/Q\to  \mathbb L^\circ _{G/P}$ and $\kappa^\vee: G/Q\to \mathbb L_{G/P}^{\vee\circ}$
are isomorphisms. 
\end{lemma}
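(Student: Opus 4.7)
The plan is to identify both $\kappa$ and $\kappa^\vee$ as morphisms of $\mathbb{G}_m$-torsors over $G/P$ and then invoke the fact that any morphism of torsors (for the same structure group and base) is automatically an isomorphism.

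First I would establish the torsor structure on the source. Since $\mathbb{G}_m$ acts on $V$ faithfully by scalar multiplication and $v\neq 0$, the intersection of $\mathbb{G}_m\subset G$ with the stabilizer of $v$ is trivial; in particular $\mathbb{G}_m\cap Q=\{e\}$. Because $\mathbb{G}_m$ is central in $G=G_0\times\mathbb{G}_m$ it normalizes $Q$, so $P=Q\mathbb{G}_m$ is a well-defined subgroup and the multiplication map $Q\times\mathbb{G}_m\to P$ is an isomorphism, identifying $P/Q$ canonically with $\mathbb{G}_m$. In particular $Q$ is normal in $P$ and the projection $G/Q\to G/P$ is a right $\mathbb{G}_m$-torsor. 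On the target side, $\mathbb{L}^\circ\to\mathbb{P}(V)$ is the $\mathbb{G}_m$-torsor associated to the tautological line bundle, and base change yields the $\mathbb{G}_m$-torsor $\mathbb{L}^\circ_{G/P}\to G/P$.

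Next I would verify that $\kappa$ is equivariant for these two $\mathbb{G}_m$-actions. For $\lambda\in\mathbb{G}_m\subset P$ acting on $G/Q$ by right multiplication and on fibers of $\mathbb{L}^\circ_{G/P}$ by scalar multiplication, centrality of $\mathbb{G}_m$ gives
\[
\kappa(gQ\cdot\lambda)=\kappa(g\lambda Q)=(g\lambda)\cdot v=\lambda\cdot(gv)=\lambda\cdot\kappa(gQ).
\]
Since $\kappa$ manifestly covers the identity map $G/P\to G/P$, it is a morphism of $\mathbb{G}_m$-torsors over $G/P$, hence an isomorphism.

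The same argument, with $v^\ast$ in place of $v$ and $\mathbb{L}^\vee$ in place of $\mathbb{L}$, handles $\kappa^\vee$: the contragredient action of $\mathbb{G}_m$ on $V^\vee$ is again a faithful scalar action, and the hypothesis recorded in the paper that $Q$ is also the connected component of the stabilizer of $v^\ast$ again yields $P/Q\cong\mathbb{G}_m$ and realizes $\kappa^\vee$ as a morphism of $\mathbb{G}_m$-torsors over $G/P$. The main subtlety requiring care is the passage from ``bijection on points'' to ``isomorphism of schemes'': in positive characteristic the full stabilizer need not be smooth, so one must interpret $Q$ as the connected component of the reduced stabilizer (or work with fppf torsors throughout) in order that $G/Q$ be a genuine smooth scheme and the quotient maps above really be torsors; once this setup is in place, the torsor argument is formal.
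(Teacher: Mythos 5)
Your argument is correct, and it takes a genuinely different route from the paper's. The paper proves the lemma by showing that the orbit map $G \to \mathbb{L}^\circ_{G/P}$, $g \mapsto g\cdot(v,eP)$, is smooth (by checking surjectivity of its differential at $e$, splitting the tangent space of $\mathbb{L}^\circ_{G/P}$ into the $G/P$-direction and the fiber direction handled by the $\mathbb{G}_m$-factor), identifies the stabilizer of $(v,eP)$ with $Q$, and then invokes Springer's results on orbit maps to conclude that the induced map on the quotient is an isomorphism. You instead exhibit $\kappa$ as a morphism of $\mathbb{G}_m$-torsors over $G/P$: the key observations are that $\mathbb{G}_m\cap Q$ is trivial and $\mathbb{G}_m$ is central, so $P\cong Q\times\mathbb{G}_m$, $Q\triangleleft P$, and $G/Q\to G/P$ is a $\mathbb{G}_m$-torsor, matched against the tautological torsor $\mathbb{L}^\circ_{G/P}\to G/P$; equivariance then forces $\kappa$ to be an isomorphism. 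Your approach is more structural, avoids any tangent-space computation, and makes transparent exactly where the $\mathbb{G}_m$-factor is used; the paper's approach is more hands-on and defers to general machinery on orbits and quotients. Your cautionary note about positive characteristic is reasonable, though it is worth pointing out that the torsor argument as you phrase it does not actually require $Q$ to be smooth: the scheme-theoretic intersection $\mathbb{G}_m\cap\mathrm{Stab}(v)$ is already trivial (because $\lambda\mapsto\lambda v - v$ is a closed immersion for $v\neq 0$), and with fppf quotients the identifications $P\cong Q\times\mathbb{G}_m$ and $P/Q\cong\mathbb{G}_m$ and the torsor structure on $G/Q\to G/P$ go through for any closed subgroup scheme $Q$.
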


\begin{proof} Note that $Q$ is exactly the stabilizer of $(v,eP)$ considered as a point in $\mathbb L^\circ _{G/P}$ with respect to the action of 
$G$ on $\mathbb L^\circ_{G/P}$. Moreover, the action of $G$ on $\mathbb L^\circ_{G/P}$ is transitive. We claim that the morphism 
$$G\to \mathbb L^\circ_{G/P}, \quad g\mapsto g\cdot (v, eP)$$ is smooth. It suffices to show it induces an epimorphism on tangent spaces
$$T_eG\to T_{(v, eP)}  (\mathbb L^\circ_{G/P}).$$ 
The tangent space $T_{(v, eP)}  (\mathbb L^\circ_{G/P})$ is isomorphic to the direct sum of $T_{eP}(G/P)$ and the one dimensional space corresponding 
to the fiber of the line bundle $\mathbb L_{G/P}$ over $eP$. The canonical homomorphism $T_eG\to T_e(G/P)$ is surjective. The tangent vectors along 
the direct factor $\mathbb G_m$ of $G$ are mapped surjectively onto the above one dimensional subspace. This proves our claim. By \cite[5.5.4 and 5.5.5]{S},
$\mathbb L^\circ _{G/P}$ is isomorphic to $G/Q$. Similarly,  $\mathbb L_{G/P}^{\vee\circ}$ is also isomorphic to $G/Q$. 
\end{proof} 

We use the isomorphism in the above lemma to identify $G/Q$ with  $\mathbb L^\circ _{G/P}$ (resp. $\mathbb L_{G/P}^{\vee\circ}$). By abuse of notation, we denote 
the sheaf $\kappa_!\mathcal L_\beta$ (resp. $\kappa^\vee_!\mathcal L_\beta$) on $\mathbb L^\circ _{G/P}$ (resp. $\mathbb L_{G/P}^{\vee\circ}$) also by 
$\mathcal L_\beta$. 
Let $j_{\mathbb L_{G/P}^\circ}:\mathbb L_{G/P}^\circ\hookrightarrow 
\mathbb L_{G/P}$ 
and $j_{\mathbb L_{G/P}^{\vee\circ}}:
\mathbb L_{G/P}^{\vee\circ}\hookrightarrow \mathbb L_{G/P}^\vee$ be the open immersions of the complements of zero sections, and
let $\mathrm{pr}:V^\vee\times_k {G/P}\to V^\vee$ be the projection. 
Let $\mathcal L_\beta|_{\mathbb G_m}$ (resp. $\mathcal L_\psi|_{\mathbb G_m}$) be the inverse image of 
$\mathcal L_\beta$ (resp. $\mathcal L_\psi$) under the canonical morphism $\mathbb G_m\to G'=G/Q[G,G]$ (resp. $\mathbb G_m\to \mathbb A^1$).
Let $G_!(\beta,\psi)$ and $G_*(\beta, \psi)$ be the sheaves on $\mathrm{Spec}\,k$ 
defined by the following Galois representations:
\begin{eqnarray*}
G_!(\beta,\psi)&=&\left\{\begin{array}{cl}
H^1_c(\mathbb G_{m,\bar k}, 
\mathcal L_\beta|_{\mathbb G_m}\otimes \mathcal L_\psi|_{\mathbb G_m})& \hbox{if } \mathcal L_\beta|_{\mathbb G_{m,\bar k}} \hbox { is nontrivial},\\
\overline{\mathbb Q}_\ell & \hbox{otherwise},
\end{array}\right. \\
G_*(\beta,\psi)&=&\left\{\begin{array}{cl}
H^1(\mathbb G_{m,\bar k}, 
\mathcal L_\beta|_{\mathbb G_m}\otimes \mathcal L_\psi|_{\mathbb G_m})& \hbox{if } \mathcal L_\beta|_{\mathbb G_{m,\bar k}} \hbox { is nontrivial},\\
\overline{\mathbb Q}_\ell(-1) & \hbox{otherwise},
\end{array}\right.
\end{eqnarray*}
By Poincar\'e duality, we have 
$$G_*(\beta,\psi)\cong G_!(\beta,\psi)^\vee (-1).$$ Actually if $\mathcal L_\beta|_{\mathbb G_{m,\bar k}}$ is nontrivial, then
$$H^1_c(\mathbb G_{m,\bar k}, 
\mathcal L_\beta|_{\mathbb G_m}\otimes \mathcal L_\psi|_{\mathbb G_m})\cong H^1(\mathbb G_{m,\bar k}, 
\mathcal L_\beta|_{\mathbb G_m}\otimes \mathcal L_\psi|_{\mathbb G_m})$$ and hence $G_!(\beta,\psi)=G_*(\beta, \psi)$ in this case.  
We will show that $G_\ast(\beta,\psi)$ and $G_!(\beta, \psi)$ are of rank $1$. (Confer Lemma \ref{fiber}). 
Denote the inverse images of $G_!(\beta,\psi)$ and $G_*(\beta,\psi)$ on any $k$-scheme by the same notation. 
In the case where $k=\mathbb F_q$ is a finite field with $q$ elements and  $\mathcal L_\beta|_{\mathbb G_{m,\bar k}}$ is nontrivial, by the 
Grothendieck trace formula, $G_\ast(\beta,\psi)$ and $G_\ast(\beta,\psi)$ is defined by the Galois representation
$$\mathrm{Gal}(\overline {\mathbb F}_q/\mathbb F_q)\to \overline{\mathbb Q}_\ell^\ast,\quad \mathrm{Frob}_q\mapsto 
-\sum_{t\in \mathbb F_q^\ast}\beta(t)\psi(t),$$ where $\mathrm{Frob}_q$ is the geometric Frobenius element in 
$\mathrm{Gal}(\overline {\mathbb F}_q/\mathbb F_q)$, and the 
righthand side is nothing but the Gauss sum. 

Let $U=\mathrm{ev}_{G/P}^{-1}(\mathbb L_{G/P}^{\vee\circ})$ and let $H$ be the complement of $U$ in
$V^\vee\times_k G/P$. Then $U$ (resp. $H$) is an open (resp. closed) 
subset 
of $V^\vee\times {G/P}$, and a rational point $(\phi,gP)$ of $V^\vee\times G/P$ lies in $U$ (resp. $H$) if and only if $\phi(g(v))\not=0$ (resp. $\phi(g(v))=0$). 
For any rational point $\phi$ of 
$V^\vee$, the fiber $H_\phi$ of $H$ over $\phi$ is the hyperplane section $\{gP\in G/P | \phi(gv)=0\}$ in $G/P$, and 
the fiber $U_\phi$ of $U$ over $\phi$ is the complement in $G/P$ of the hyperplane section. 
Fix notations by the following commutative diagram:
$$\begin{array}{rccrl}
G/Q\cong \mathbb L_{G/P}^{\vee\circ}&\stackrel {\mathrm{ev}'_{G/P}}\leftarrow &U&&H\\
{\scriptstyle j_{\mathbb L_{G/P}^{\vee\circ}}}\downarrow&&\downarrow{\scriptstyle j'}&{\scriptstyle i'}\swarrow&\\
\mathbb L^\vee_{G/P}&\stackrel {\mathrm{ev}_{G/P}}\leftarrow &V^\vee \times_k (G/P)&\stackrel {\mathrm{pr}}\to & V^\vee\\
&\searrow&\downarrow&&\downarrow\\
&&G/P&\to&\mathrm{Spec}\,\mathbb F_q
\end{array}$$

\begin{theorem} \label{mainthm} Notation as above. Suppose $Q$ is geometrically connected. 

(i) We have 
\begin{eqnarray*}
\mathcal T_!(G, \beta, V, v,\psi)&\cong&R\mathrm{pr}_! \mathrm{ev}_{G/P}^\ast R  j_{\mathbb L_{G/P}^{\vee\circ},*} \mathcal L_\beta
[n+N-1]\otimes G_!(\beta,\psi),\\
\mathcal T_\ast (G, \beta, V, v,\psi)&\cong&R\mathrm{pr}_\ast \mathrm{ev}_{G/P}^\ast j_{\mathbb L_{G/P}^{\vee\circ},!} \mathcal L_\beta
[n+N-1]\otimes G_*(\beta,\psi)\\
&\cong&R\mathrm{pr}_\ast j'_!  \mathrm{ev}_{G/P}^{\prime\ast} \mathcal L_\beta
[n+N-1]\otimes G_*(\beta,\psi).
\end{eqnarray*}

(ii) Suppose furthermore that there exists a multiplicative sheaf $\mathcal L_{\beta_0}$ on $G/P[G,G]$ such that $\mathcal L_\beta$ 
is isomorphic to its inverse image under the canonical morphism 
$G/Q[G,G]\to G/P[G,G]$. Then we have 
\begin{eqnarray*}
\mathcal T_\ast (G, \beta, V, v,\psi)&\cong&R\mathrm{pr}_\ast j'_! (\mathcal L_{\beta_0}|_{U})(-1)
[n+N-1],
\end{eqnarray*}
and we have a distinguished triangle 
$$R\mathrm{pr}_! Ri'_* (\mathcal L_{\beta_0}|_{H})(-1)
[n+N-3]\to R\mathrm{pr}_!  (\mathcal L_{\beta_0}|_{V^\vee\times_k G/P})
[n+N-1]\to  \mathcal T_!(G, \beta, V, v,\psi)\to,$$
where $\mathcal L_{\beta_0}|_{U}$, $\mathcal L_{\beta_0}|_{H}$ and $(\mathcal L_{\beta_0}|_{V^\vee\times_k G/P})$ 
are the inverse images of the sheaf $\mathcal L_{\beta_0}$ on $G/P$.
\end{theorem}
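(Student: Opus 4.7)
The strategy is to factor $\iota:G/Q\to V$ through the tautological line bundle $\mathbb{L}_{G/P}$ so that the Deligne--Fourier transform can be computed in two stages: a \emph{relative} Fourier transform along the fibers of the trivial bundle $V\times_k G/P\to G/P$, followed by a proper pushforward to $V^\vee$. Because the fibers of $\mathbb{L}_{G/P}^\circ\to G/P$ are copies of $\mathbb{G}_m$ and $\mathcal{L}_\beta$ is multiplicative along them, the fiberwise computation reduces to a one-variable Fourier transform of a Kummer sheaf, which by a change of variable gives the Gauss-sum cohomology $H^*_c(\mathbb{G}_m,\mathcal{L}_\beta|_{\mathbb{G}_m}\otimes\mathcal{L}_\psi|_{\mathbb{G}_m})$ and thereby produces the factor $G_!(\beta,\psi)$ or $G_*(\beta,\psi)$.

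For (i), Lemma~\ref{isomorphism} supplies the factorization
\[
G/Q \xrightarrow[\cong]{\kappa} \mathbb{L}_{G/P}^\circ \xrightarrow{j_{\mathbb{L}_{G/P}^\circ}} \mathbb{L}_{G/P}\xrightarrow{i_{G/P}} V\times_k G/P \xrightarrow{q} V,
\]
so that $\iota_!\mathcal{L}_\beta \cong Rq_!\, i_{G/P,!}\, j_{\mathbb{L}_{G/P}^\circ,!}\,\mathcal{L}_\beta$. Standard base change on the diagram $V\times_k V^\vee\times_k G/P$ yields the commutation $\mathscr{F}_\psi^V\circ Rq_!\cong R\mathrm{pr}_!\circ\mathscr{F}_\psi^{\mathrm{rel}}$, where $\mathscr{F}_\psi^{\mathrm{rel}}$ denotes the Fourier transform for the trivial vector bundle $V\times_k G/P\to G/P$. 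Next, I would establish the sub-line-bundle formula
\[
\mathscr{F}_\psi^{\mathrm{rel}}\,i_{G/P,!}(-)\;\cong\;\mathrm{ev}_{G/P}^*\,\mathscr{F}_\psi^{\mathbb{L}_{G/P},\mathrm{rel}}(-)\,[N-1],
\]
the relative version of the elementary identification of the Fourier transform on $V$ of a sheaf extended by zero from a subspace $W\hookrightarrow V$ with the pullback of its Fourier transform on $W$ via $V^\vee\to W^\vee$; this follows from the projection formula and a Cartesian-square base change through the dual short exact sequence $0\to W^\perp\to V^\vee\times_k G/P\to \mathbb{L}^\vee_{G/P}\to 0$. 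Finally, the fiberwise Gauss-sum computation together with Lemma~\ref{fiber} identifies
\[
\mathscr{F}_\psi^{\mathbb{L}_{G/P},\mathrm{rel}}(j_{\mathbb{L}_{G/P}^\circ,!}\mathcal{L}_\beta)\;\cong\; Rj_{\mathbb{L}_{G/P}^{\vee\circ},*}\mathcal{L}_\beta\otimes G_!(\beta,\psi),
\]
and assembling the three formulas gives $\mathcal{T}_!$. The description of $\mathcal{T}_*$ follows by the parallel argument with $j_!$ and $Rj_*$ interchanged, or equivalently by applying the Verdier duality $D_{V^\vee}\mathcal{T}_*(\beta,\psi)\cong\mathcal{T}_!(\beta^{-1},\psi^{-1})(n+N)$ of the preceding proposition and dualizing the formula just obtained for $\mathcal{T}_!$.

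For (ii), the hypothesis that $\mathcal{L}_\beta$ descends to $G/P[G,G]$ renders $\mathcal{L}_\beta|_{\mathbb{G}_{m,\bar k}}$ trivial, so $G_*(\beta,\psi)=\overline{\mathbb{Q}}_\ell(-1)$; it also implies $\mathrm{ev}_{G/P}^{\prime *}\mathcal{L}_\beta\cong \mathcal{L}_{\beta_0}|_U$, since both sides are pulled back from the same local system on $G/P$ via the projection $U\to G/P$. Smooth base change along the smooth morphism $\mathrm{ev}_{G/P}$ gives $\mathrm{ev}_{G/P}^*\,j_{\mathbb{L}_{G/P}^{\vee\circ},!}\cong j'_!\,\mathrm{ev}_{G/P}^{\prime *}$, and substitution into (i) produces the stated expression for $\mathcal{T}_*$. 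For the distinguished triangle, apply $R\mathrm{pr}_![n+N-1]$ to the localization triangle
\[
Ri'_* Ri'^!(\mathcal{L}_{\beta_0}|_{V^\vee\times_k G/P})\to \mathcal{L}_{\beta_0}|_{V^\vee\times_k G/P}\to Rj'_*(\mathcal{L}_{\beta_0}|_U)\to
\]
on $V^\vee\times_k G/P$. Since $H$ is the smooth divisor cut out by the section $(\phi,gP)\mapsto \phi(gv)$, absolute purity gives $Ri'^!(\mathcal{L}_{\beta_0}|_{V^\vee\times_k G/P})\cong (\mathcal{L}_{\beta_0}|_H)(-1)[-2]$, while the rightmost term becomes $\mathcal{T}_!$ by the formula of (i) (using $G_!(\beta,\psi)=\overline{\mathbb{Q}}_\ell$ here), yielding the triangle.

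The main obstacle is the third step of (i)---the relative Fourier transform on the line bundle $\mathbb{L}_{G/P}$---where one must carefully exploit the multiplicativity of $\mathcal{L}_\beta$ along the $\mathbb{G}_m$-fibers, reconcile the two identifications $\kappa$ and $\kappa^\vee$ of $G/Q$ with the complements of the zero sections in $\mathbb{L}_{G/P}$ and $\mathbb{L}_{G/P}^\vee$, and verify that the character, twist, and sign produced by the Gauss-sum change of variables agree with $Rj_{\mathbb{L}_{G/P}^{\vee\circ},*}\mathcal{L}_\beta$ as stated. All other ingredients are essentially formal consequences of base change, the projection formula, the purity theorem, and Lemma~\ref{fiber}.
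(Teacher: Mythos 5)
Your overall strategy coincides with the paper's: factor $\iota$ through the tautological line bundle $\mathbb L_{G/P}$, use the two commutation formulas for the Deligne--Fourier transform (with respect to $Rq_!$ and with respect to the sub-line-bundle $i_{G/P}$, both from Laumon), and then reduce everything to computing the relative Fourier transform of $j_{\mathbb L_{G/P}^\circ,!}\mathcal L_\beta$ (resp.\ $Rj_{\mathbb L_{G/P}^\circ,*}\mathcal L_\beta$) on the line bundle. Your treatment of part (ii) is also correct in substance, including the twist $(-1)$ coming from $G_*(\beta,\psi)\cong\overline{\mathbb Q}_\ell(-1)$, the identification $\mathrm{ev}_{G/P}^{\prime*}\mathcal L_\beta\cong\mathcal L_{\beta_0}|_U$, and the purity/localization triangle giving the distinguished triangle for $\mathcal T_!$.

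The gap is in the step you yourself flag as ``the main obstacle.'' You describe the identification
$$\mathscr F_{\psi,\mathbb L_{G/P}}(j_{\mathbb L_{G/P}^\circ,!}\mathcal L_\beta)\;\cong\;Rj_{\mathbb L_{G/P}^{\vee\circ},*}\mathcal L_\beta\otimes G_!(\beta,\psi)$$
as following from ``the fiberwise Gauss-sum computation together with Lemma~\ref{fiber},'' but a fiberwise computation does not determine a sheaf globally: knowing the stalks of the Fourier transform over each point of $G/P$ (in particular at $v^\ast$ and $0_{[v]}$, which is all Lemma~\ref{fiber} supplies) only tells you the pointwise ranks and does not pin down the lisse sheaf on $\mathbb L_{G/P}^{\vee\circ}$, which is a nontrivial $\pi_1$-representation. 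What the paper actually does to close this is: first establish $G$-equivariance of the Fourier transform (via the K\"unneth and equivariance lemmas, essentially Lemmas~\ref{equivariant}, \ref{action}, \ref{Kunneth}), so that pulling back along $\phi:G\to G/Q\cong\mathbb L_{G/P}^{\vee\circ}$ splits off a known factor; then descend along $\phi$, using that $\phi$ is smooth with geometrically connected fibers, hence $\pi_1(G)\twoheadrightarrow\pi_1(\mathbb L_{G/P}^{\vee\circ})$. This is exactly where the hypothesis ``$Q$ is geometrically connected'' is consumed -- it makes the fibers of $\phi$ geometrically connected -- and your proposal never invokes that hypothesis anywhere, which is a symptom of the missing descent argument. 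Without it, the one-point stalk computation from Lemma~\ref{fiber} cannot be promoted to the global identification you need, and the proof does not go through.

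A smaller imprecision: for the $\mathcal T_*$ formula you invoke ``the parallel argument with $j_!$ and $Rj_*$ interchanged,'' but the commutation $\mathscr F_\psi\circ R\pi_*\cong R\mathrm{pr}_*\circ\mathscr F_{\psi,\,V\times_k G/P}$ is not the proper-base-change statement used for $R\pi_!$; it requires smooth base change (the paper points this out, citing the proof of \cite[1.2.3.5]{L}). Your alternative route via the Verdier duality formula $D_{V^\vee}\mathcal T_*(\beta,\psi)\cong\mathcal T_!(\beta^{-1},\psi^{-1})(n+N)$ does work and avoids this subtlety, but it still depends on the global identification of the Fourier transform discussed above, so it does not rescue the main gap.
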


Combined with the proper base change theorem, we have the following corollary which 
shows that tautological systems parameterize the cohomology of $U_\phi$. 

\begin{corollary}\label{maincor} Suppose $G/P$ is proper and $Q$ is geometrically connected. 

(i) For any point $\phi$ in $V^\vee$, we have
$$\mathcal T_\ast(G,\beta,V,v,\psi)_{\bar \phi}=R\Gamma_c(U_{\bar \phi}, (\mathrm{ev}_{G/P}^{\prime\ast} 
\mathcal L_\beta)|_{U_{\bar \phi}})[n+N-1]\otimes 
G_*(\beta,\psi).$$ 

(ii) Suppose that there exists a multiplicative sheaf $\mathcal L_{\beta_0}$ on $G/P[G,G]$ such that $\mathcal L_\beta$ 
is isomorphic to its inverse image under the canonical morphism 
$G/Q[G,G]\to G/P[G,G]$. Then any point $\phi$ in $V^\vee$, we have
$$\mathcal T_\ast(G,\beta,V,v,\psi)_{\bar \phi}=R\Gamma_c(U_{\bar \phi}, \mathcal L_{\beta_0}|_{U_{\bar \phi}})(-1)[n+N-1].$$ 
\end{corollary}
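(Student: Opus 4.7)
The plan is to deduce both parts of the corollary directly from the formulas for $\mathcal{T}_\ast$ established in Theorem \ref{mainthm}, by applying the proper base change theorem. The crucial input is the hypothesis that $G/P$ is proper over $k$: this implies that the projection $\mathrm{pr}: V^\vee \times_k G/P \to V^\vee$ is proper.

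For part (i), I begin with the formula
$$\mathcal{T}_\ast(G, \beta, V, v, \psi) \cong R\mathrm{pr}_\ast j'_! \mathrm{ev}_{G/P}^{\prime\ast} \mathcal{L}_\beta[n+N-1] \otimes G_\ast(\beta, \psi)$$
from Theorem \ref{mainthm}(i). Since $\mathrm{pr}$ is proper, $R\mathrm{pr}_\ast = R\mathrm{pr}_!$, so the right-hand side (ignoring the shift and the twist) may be rewritten as $R(\mathrm{pr}\circ j')_! \mathrm{ev}_{G/P}^{\prime\ast}\mathcal{L}_\beta$. The composition $\mathrm{pr}\circ j': U \to V^\vee$ has fiber $U_{\bar\phi}$ over $\bar\phi$. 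Proper base change for $R(\cdot)_!$ then identifies the stalk at $\bar\phi$ with $R\Gamma_c(U_{\bar\phi}, (\mathrm{ev}_{G/P}^{\prime\ast}\mathcal{L}_\beta)|_{U_{\bar\phi}})$. Restoring the shift $[n+N-1]$ and the tensor factor $G_\ast(\beta, \psi)$ yields (i).

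For part (ii), the same argument applies verbatim, starting instead from the cleaner formula
$$\mathcal{T}_\ast(G, \beta, V, v, \psi) \cong R\mathrm{pr}_\ast j'_! (\mathcal{L}_{\beta_0}|_U)(-1)[n+N-1]$$
of Theorem \ref{mainthm}(ii). Proper base change produces the stalk $R\Gamma_c(U_{\bar\phi}, \mathcal{L}_{\beta_0}|_{U_{\bar\phi}})(-1)[n+N-1]$, as required.

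No substantive obstacle arises: the corollary is a formal consequence of the main theorem once one knows that $\mathrm{pr}$ is proper, which is immediate from the hypothesis on $G/P$. The only minor subtlety is bookkeeping around $j'_!$: one must push the extension-by-zero through proper base change so that compactly supported cohomology appears naturally on the open stratum $U_{\bar\phi}$, rather than ordinary cohomology on the full fiber $(G/P)_{\bar\phi}$ with the sheaf extended by zero outside $U_{\bar\phi}$.
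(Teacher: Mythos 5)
Your argument is correct and is exactly the paper's proof: the paper likewise deduces the corollary by applying the proper base change theorem to the formulas for $\mathcal T_\ast$ in Theorem \ref{mainthm}, using properness of $G/P$ to convert $R\mathrm{pr}_\ast$ into $R\mathrm{pr}_!$ before taking stalks.
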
 

\begin{proof} Follows from the formula for $\mathcal T_\ast(G,\beta,V,v,\psi)$ in Theorem \ref{mainthm} 
and the proper base change theorem. 
\end{proof}

\begin{remark} Suppose $k=\mathbb F_q$ is a finite field. For any $\mathbb F_q$-point $x$ of $G/P$, the fiber 
$\pi^{-1}(x)$ of the projection $\pi:G\to G/P$ is a principal $P$-homogenous space. By \cite[VI \S1 Corollary 1]{S}, $\pi^{-1}(x)$ has a $\mathbb F_q$-point. 
Thus any $\mathbb F_q$-point of $G/P$ is the image of a $\mathbb F_q$-point of $G$. For any $\mathbb F_q$-point $g$ of 
$G$, denote its image in $G/P$ by $gP$. Suppose $\mathcal L_\beta$ is the Lang sheaf defined by a character
$\beta: (G/Q[G,G])(\mathbb F_q)\to\overline{\mathbb Q}_\ell^*$. Then by the Grothendieck trace formula and Corollary \ref{maincor}, for 
any $\mathbb F_q$-point $\phi$ of $V^\vee$, we have 
\begin{eqnarray*}
\mathrm{Tr}(\mathrm{Frob}_\phi, \mathcal T_\ast(G,\beta,V,v,\psi)_{\bar \phi})
=(-1)^{n+N-1}G_*(\beta, \psi)\sum_{x\in (G/P)(\mathbb F_q),\; \phi(g_xv)\not=0}\beta(\phi(g_xv)^{-1}\cdot g_x),
\end{eqnarray*}
where for any $\mathbb F_q$-point $x$ of $G/P$,  $g_x$ is a $\mathbb F_q$-point of $G$ in the fiber $\pi^{-1}(x)$, 
$\phi(g_xv)^{-1}$ is considered as a $\mathbb F_q$-point of 
$\mathbb G_m\subset G$, $\beta(\phi(g_xv)^{-1}\cdot g_x)$ is the value of $\beta$ at the image of the $\mathbb F_q$-point
$\phi(g_xv)^{-1}\cdot g_x$ in $G/Q[G,G]$, and $G_*(\beta, \psi)$ is the Gauss sum. Note that the value $\beta(\phi(g_xv)^{-1}\cdot g_x)$ is independent of the choice of the 
$\mathbb F_q$-points $g_x$ in the fiber $\pi^{-1}(x)$. If 
there exists a character 
$\beta_0:(G/P[G,G])(\mathbb F_q)\to\overline{\mathbb Q}_\ell^\ast$ such that 
$\beta$ is the composite $$(G/Q[G,G])(\mathbb F_q)\to (G/P[G,G])(\mathbb F_q)\stackrel{\beta_0}\to
\overline{\mathbb Q}_\ell^\ast.$$  Then 
\begin{eqnarray*}
\mathrm{Tr}(\mathrm{Frob}_\phi, \mathcal T_\ast(G,\beta,V,v,\psi)_{\bar \phi})
&=&(-1)^{n+N-1}q\sum_{x\in (G/P)(\mathbb F_q),\; \phi(g_xv)\not=0}\beta_0(g_x),\\
\mathrm{Tr}(\mathrm{Frob}_\phi, \mathcal T_!(G,\beta,V,v,\psi)_{\bar \phi})
&=&(-1)^{n+N-1}\sum_{x\in (G/P)(\mathbb F_q)}\beta_0(g_x)-
(-1)^{n+N-1}q\sum_{x\in (G/P)(\mathbb F_q),\; \phi(g_xv)=0}\beta_0(g_x).
\end{eqnarray*}
The second equation follows from the distinguished triangle in Theorem \ref{mainthm} (ii). 
\end{remark}

\section{Proof of Theorem \ref{mainthm}} 

Let $\kappa_0:G/Q\to V-\{0\}$ be the morphism $gQ\mapsto gv$, 
let $j_0:V-\{0\}\hookrightarrow V$ be the open immersion, and let $\pi:V\times G/P \to V$ be the projection. 
The following diagram commutes:
$$\begin{array}{rcccl}
&&G/Q&&\\
&\stackrel{ \kappa}{\underset\cong\swarrow}&&\stackrel{\kappa_0}\searrow&\\
\mathbb L_{G/P}^\circ&& && V-\{0\}\\
{\scriptstyle j_{\mathbb L_{G/P}^\circ}}\downarrow&&&&\downarrow{\scriptstyle j_0}\\
\mathbb L_{G/P}&\stackrel {i_{G/P}}\to &V\times_k (G/P)&\stackrel \pi\to & V\\
&\searrow&\downarrow&&\downarrow\\
&&G/P&\to&\mathrm{Spec}\,\mathbb F_q
\end{array}$$
We have $\iota=j_0\kappa_0$, and hence 
\begin{eqnarray*}
\mathcal T_!(G, \beta, V, v,\psi)=\mathscr F_\psi(j_{0!}\kappa_{0!} \mathcal L_\beta[n])
\cong  \mathscr F_\psi(R\pi_! Ri_{G/P,!} j_{\mathbb L_{G/P}^\circ, !} \mathcal L_\beta[n]).
\end{eqnarray*}
By \cite[1.2.3.5 and 1.2.2.4]{L}, we have 
$$\mathscr F_\psi R\pi_!\cong R\mathrm{pr}_! \mathscr F_{\psi, V\times_k (G/P)},\quad 
\mathscr F_{\psi, V\times_k (G/P)}Ri_{G/P,!}\cong  \mathrm{ev}_{G/P}^\ast \mathscr F_{\psi,\mathbb L_{G/P}}[N-1],$$
where  
$\mathscr F_{\psi,V\times_k (G/P)}$ is the Deligne-Fourier 
transform for the trivial vector bundle $V\times_k (G/P)\to G/P$ and 
$\mathscr F_{\psi,\mathbb L_{G/P}} $ is the Deligne-Fourier transform for the vector bundle $\mathbb L_{G/P}\to G/P$.  We thus have 
$$\mathcal T_!(G, \beta, V, v,\psi)\cong  \mathscr F_\psi(R\pi_!R i_{G/P,!} j_{\mathbb L_{G/P}^\circ, !}\mathcal L_\beta[n])\cong 
R\mathrm{pr}_!\mathrm{ev}_{G/P}^\ast \mathscr F_{\psi,\mathbb L_{G/P}}
(j_{\mathbb L_{G/P}^\circ, !}\mathcal L_\beta)[n+N-1].$$
The assertion for $\mathcal T_!(G, \beta, V, v)$ in Theorem \ref{mainthm} (i) then follows from Lemma \ref{mainlemma} below. 

Using \cite[1.3.1.1]{L}, the smooth base change theorem, and the same
proof as  \cite[1.2.3.5]{L}, one can show 
$$\mathscr F_\psi R\pi_\ast = R\mathrm{pr}_\ast \mathscr F_{\psi, V\times_k (G/P)}.$$
Since $i_{G/P}$ is a closed immersion we have $Ri_{G/P,\ast}=i_{G/P,!}$, and hence by \cite[1.2.2.4]{L}, we have  
$$\mathscr F_{\psi, V\times_k (G/P)}Ri_{G/P,\ast} \cong  \mathrm{ev}_{G/P}^\ast \mathscr F_{\psi,\mathbb L_{G/P}}[N-1].$$ So we have 
\begin{eqnarray*}
\mathcal T_\ast(G, \beta, V, v,\psi)&\cong&
 \mathscr F_\psi(Rj_{0\ast}R\kappa_{0\ast} \mathcal L_\beta[n])\\
&\cong&  \mathscr F_\psi(R\pi_\ast Ri_{G/P,\ast} Rj_{\mathbb L_{G/P}^\circ, \ast}\mathcal L_\beta[n])\\
&\cong& R\mathrm{pr}_\ast \mathrm{ev}_{G/P}^\ast \mathscr F_{\psi,\mathbb L_{G/P}}
(Rj_{\mathbb L_{G/P}^\circ, \ast } \mathcal L_\beta)[n+N-1].
\end{eqnarray*}
The assertion for $\mathcal T_\ast(G, \beta, V, v,\psi)$ in Theorem \ref{mainthm} (i) also follows from Lemma \ref{mainlemma} below and the proper base 
change theorem . 

Suppose furthermore that there exists a multiplicative sheaf $\mathcal L_{\beta_0}$ on $G/P[G,G]$ such that $\mathcal L_\beta$ 
is isomorphic to its inverse image under the canonical morphism 
$G/Q[G,G]\to G/P[G,G]$. Then we have 
$$G_*(\beta,\psi)\cong \overline{\mathbb Q}_\ell,\quad G_!(\beta,\psi)\cong \overline{\mathbb Q}_\ell(-1).$$ 
Moreover, by (i) and the proper base change theorem, we have 
\begin{eqnarray*}
\mathcal T_\ast (G, \beta, V, v,\psi)&\cong&R\mathrm{pr}_\ast \mathrm{ev}_{G/P}^\ast j_{\mathbb L_{G/P}^{\vee\circ},!} (\mathcal L_{\beta_0}|_{\mathbb L_{G/P}^{\vee\circ}})
(-1)[n+N-1]\\
&\cong &R\mathrm{pr}_\ast j'_! \mathrm{ev}_{G/P}^{\prime\ast}(\mathcal L_{\beta_0}|_{\mathbb L_{G/P}^{\vee\circ}})
(-1)[n+N-1]\\
&\cong& R\mathrm{pr}_\ast j'_! (\mathcal L_{\beta_0}|_{U'})(-1)[n+N-1]..
\end{eqnarray*}
Let $i:G/P\to \mathbb L_{G/P}^{\vee}$ be the zero section. We have $$Ri^! (\mathcal L_{\beta_0}|_{\mathbb L_{G/P}^{\vee}})\cong \mathcal L_{\beta_0}(-1)[-2]$$
and hence we have a distinguished triangle 
$$i_* \mathcal L_{\beta_0}(-1)[-2] \to\mathcal L_{\beta_0} |_{\mathbb L_{G/P}^{\vee}}\to 
R  j_{\mathbb L_{G/P}^{\vee\circ},*} (\mathcal L_{\beta_0}|_{\mathbb L_{G/P}^{\vee\circ}})\to.$$
Applying $\mathrm{ev}^*_{G/P}$, we get a distinguished triangle
$$i'_*(\mathcal L_{\beta_0}|_{H})(-1)[-2]\to (\mathcal L_{\beta_0} )|_{V^\vee\times_k G/P}\to 
\mathrm{ev}_{G/P}^\ast Rj_{\mathbb L_{G/P}^{\vee\circ},*} (\mathcal L_{\beta_0}|_{\mathbb L_{G/P}^{\vee\circ}})
\to.$$ 
Applying $R\mathrm{pr}_!$, we get the distinguished triangle 
$$R\mathrm{pr}_! Ri'_* (\mathcal L_{\beta_0}|_{H})(-1)
[n+N-3]\to R\mathrm{pr}_!  (\mathcal L_{\beta_0}|_{V^\vee\times_k G/P})
[n+N-1]\to  \mathcal T_!(G, \beta, V, v,\psi)\to.$$

\medskip
In the rest of this section, we prove Lemma \ref{mainlemma} which gives formulas for $ \mathscr F_{\psi,\mathbb L_{G/P}}
(j_{\mathbb L_{G/P}^\circ, ! }\mathcal L_\beta)$ and  $\mathscr F_{\psi,\mathbb L_{G/P}}
(Rj_{\mathbb L_{G/P}^\circ, \ast } \mathcal L_\beta)$

\begin{lemma} \label{equivariant} Let $m:G\times_k \mathbb L_{G/P}\to \mathbb L_{G/P}$ be the morphism 
$(g,x)\mapsto gx$. Denote the inverse image of 
$\mathcal L_\beta$ by the morphism $G\to G/Q$ by $\mathcal L'_\beta$. We have 
\begin{eqnarray*}
m^\ast j_{\mathbb L_{G/P}^\circ,!}\mathcal L_\beta \cong \mathcal L'_\beta\boxtimes j_{\mathbb L_{G/P}^\circ,!}\mathcal L_\beta,\quad
m^\ast Rj_{\mathbb L_{G/P}^\circ,\ast}\mathcal L_\beta \cong 
\mathcal L'_\beta\boxtimes Rj_{\mathbb L_{G/P}^\circ,\ast}\mathcal L_\beta.
\end{eqnarray*}
\end{lemma}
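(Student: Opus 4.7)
The plan is to reduce the equivariance statement on $\mathbb L_{G/P}$ to the corresponding equivariance on the open stratum $\mathbb L_{G/P}^\circ \cong G/Q$, which in turn follows from the multiplicative structure of $\mathcal L_\beta$ on $G' = G/Q[G,G]$. The reduction rests on a Cartesian square together with standard base change theorems.

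First, I would observe that the $G$-action $m:G\times_k\mathbb L_{G/P}\to \mathbb L_{G/P}$ is smooth, because the shear $(g,x)\mapsto (g,gx)$ is an automorphism of $G\times_k\mathbb L_{G/P}$ and $m$ factors as this automorphism followed by the second projection. Moreover $m^{-1}(\mathbb L_{G/P}^\circ)=G\times_k\mathbb L_{G/P}^\circ$ since a linear automorphism sends non-zero vectors to non-zero vectors. Consequently, the square
$$\begin{array}{ccc}
G\times_k \mathbb L_{G/P}^\circ & \stackrel{m^\circ}{\to} & \mathbb L_{G/P}^\circ\\
{\scriptstyle \mathrm{id}_G\times j_{\mathbb L_{G/P}^\circ}}\downarrow && \downarrow{\scriptstyle j_{\mathbb L_{G/P}^\circ}}\\
G\times_k \mathbb L_{G/P} & \stackrel{m}{\to} & \mathbb L_{G/P}
\end{array}$$
is Cartesian, where $m^\circ$ denotes the restricted action.

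Second, I would establish the isomorphism $(m^\circ)^\ast \mathcal L_\beta\cong \mathcal L'_\beta \boxtimes \mathcal L_\beta$ on $G\times_k G/Q$. Under the identification $\kappa:G/Q\stackrel{\cong}{\to}\mathbb L_{G/P}^\circ$ of Lemma \ref{isomorphism}, $m^\circ$ becomes left multiplication $(g,hQ)\mapsto ghQ$. Since the quotient $G'=G/Q[G,G]$ is abelian, the canonical projections $G\to G'$ and $G/Q\to G'$ together with $m^\circ$ and the group law $m_{G'}$ form a commutative square
$$\begin{array}{ccc}
G\times_k G/Q & \stackrel{m^\circ}{\to} & G/Q\\
\downarrow && \downarrow\\
G'\times_k G' & \stackrel{m_{G'}}{\to} & G'.
\end{array}$$
Pulling back the multiplicativity isomorphism $m_{G'}^\ast\mathcal L_\beta\cong p_1^\ast \mathcal L_\beta\otimes p_2^\ast \mathcal L_\beta$ along the vertical arrows yields the desired equivariance on the open stratum.

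Finally, proper base change applied to the Cartesian square, together with the compatibility of $!$-extension with external tensor products, gives
$$m^\ast j_{\mathbb L_{G/P}^\circ,!}\mathcal L_\beta \cong (\mathrm{id}_G\times j_{\mathbb L_{G/P}^\circ})_! (m^\circ)^\ast \mathcal L_\beta \cong \mathcal L'_\beta\boxtimes j_{\mathbb L_{G/P}^\circ,!}\mathcal L_\beta.$$
The assertion for $Rj_{\mathbb L_{G/P}^\circ,\ast}\mathcal L_\beta$ is obtained in the same way, replacing proper base change by smooth base change, which is available because $m$ is smooth. The main obstacle is conceptual rather than technical: one must verify carefully that the commutative diagram of group quotients in the second step is well-defined, which hinges on the fact that $[G,G]\subseteq Q[G,G]$, so that the image of $G$ in $G'$ is an abelian quotient through which both the left and right factors of $m^\circ$ can pass.
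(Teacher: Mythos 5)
Your proof is correct and follows essentially the same route as the paper's: both exhibit $m$ as a shear automorphism followed by the projection (hence smooth), exploit the Cartesian square obtained by restricting over the open subset $\mathbb L_{G/P}^\circ$, and conclude by (smooth) base change together with compatibility of $j_!$ and $Rj_*$ with external tensor products. The one place you are more explicit than the paper is the derivation of $(m^\circ)^\ast\mathcal L_\beta \cong \mathcal L'_\beta\boxtimes\mathcal L_\beta$ from the multiplicative structure on $G' = G/Q[G,G]$, which the paper simply asserts; note also that the base change for $j_!$ along an open immersion that you invoke holds unconditionally (it is not really an instance of \emph{proper} base change), so no properness hypothesis is needed on that side.
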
 

\begin{proof} We prove the second statement. The proof for the first statement is similar. 

Let $m':G\times_k G/Q\to G/Q$ be the morphism $(g,x)\mapsto gx$. We have 
$$m^{\prime\ast} \mathcal L_\beta\cong \mathcal L'_\beta\boxtimes\mathcal L_\beta.$$
Let $\phi:G\times_k \mathbb L_{G/P}\to G\times_k \mathbb L_{G/P}$ be the 
isomorphism defined by $(g,x)\mapsto (g, gx)$ and fix notation by the following 
commutative diagram:
$$\begin{array}{ccrcrcr}
G&\stackrel{\pi'_1}\leftarrow& G\times_k G/Q&\stackrel{\phi'}\to & G\times_k G/Q&\stackrel{\pi_2'}\to& G/Q\\
\parallel&&{\scriptstyle \mathrm{id}_G\times (j_{\mathbb L_{G/P}^\circ}\kappa)}\downarrow
&&{\scriptstyle \mathrm{id}_G\times  (j_{\mathbb L_{G/P}^\circ}\kappa)}\downarrow &&
{\scriptstyle  j_{\mathbb L_{G/P}^\circ}\kappa}\downarrow \\
G&\stackrel{\pi_1}\leftarrow& G\times_k  \mathbb L_{G/P}&\stackrel{\phi}\to & G\times_k  \mathbb L_{G/P}&\stackrel{\pi_2}\to& \mathbb L_{G/P}.
\end{array}$$
We have $m=\pi_2  \phi$. It follows that $m$ is smooth. 
By the smooth base change theorem and the projection formula, we have 
\begin{eqnarray*}
m^\ast R(j_{\mathbb L_{G/P}^\circ}\kappa)_\ast\mathcal L_\beta&\cong& 
R(\mathrm{id}_G\times (j_{\mathbb L_{G/P}^\circ}\kappa))_\ast m'^\ast \mathcal L_\beta\\
&\cong& R(\mathrm{id}_G\times  (j_{\mathbb L_{G/P}^\circ}\kappa))_\ast( \mathcal L'_\beta\boxtimes\mathcal L_\beta)\\
&\cong& R(\mathrm{id}_G\times  (j_{\mathbb L_{G/P}^\circ}\kappa))_\ast
((\mathrm{id}_G\times (j_{\mathbb L_{G/P}^\circ}\kappa))^\ast \pi_1^\ast \mathcal L'_\beta\otimes\pi'^\ast_2
\mathcal L_\beta)\\
&\cong&  \pi_1^\ast \mathcal L'_\beta\otimes R(\mathrm{id}_G\times  (j_{\mathbb L_{G/P}^\circ}\kappa))_\ast \pi'^\ast_2
\mathcal L_\beta\\
&\cong& \pi_1^\ast \mathcal L'_\beta\otimes \pi_2^\ast R( j_{\mathbb L_{G/P}^\circ}\kappa)_\ast \mathcal L_\beta.
\end{eqnarray*}
This proves the second assertion.
\end{proof}

\begin{lemma} \label{small} Let $S$ and $T$ be $k$-schemes of finite type, $E\to S$ and $F\to T$ 
vector bundles over $S$ and $T$ of ranks $r_1$ and $r_2$, respectively, 
$\phi:S\stackrel\cong \to T$ an isomorphism, and $g:E\to F\times_TS$ a morphism of vector bundles. 
Let $f:E\to F$ be the composite of
the morphism $g$ and the projection $\pi: F\times_TS\to F$. Let $f^\vee:F^\vee\to E^\vee$ be the transpose of 
$f$. Then
for any object $K\in\mathrm{ob}\, D_c^b(E,\overline{\mathbb Q}_\ell)$, we have 
$$\mathscr F_\psi(Rf_! K)\cong f^{\vee\ast} \mathscr F_\psi(K)[r_2-r_1].$$
\end{lemma}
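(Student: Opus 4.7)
The plan is to exploit the factorization $f = \pi\circ g$ given in the statement, where $g: E\to F\times_T S$ is a morphism of vector bundles over the single base $S$, and $\pi: F\times_T S\to F$ is the projection. Because $\phi:S\stackrel\cong\to T$ is an isomorphism, $\pi$ is an isomorphism of schemes, and likewise its transpose $\pi^\vee: F^\vee \to F^\vee\times_T S = (F\times_T S)^\vee$ is an isomorphism. Via these two isomorphisms, the Fourier transform on $F$ relative to $T$ is identified with the Fourier transform on $F'\colon= F\times_T S$ relative to $S$, since the tautological pairings correspond and the projections commute with the base change $\phi$. This identification reduces the lemma to the analogous statement for the single-base morphism $g:E\to F'$, with $f^{\vee*}$ replaced by $g^{\vee*}$.

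For the single-base case, I would follow the pattern of \cite[1.2.2.4]{L}. Starting from the definition
\[
\mathscr F_\psi(Rg_! K) \;=\; Rp_2^{F',!}\bigl(p_1^{F',*} Rg_! K \otimes \langle\,,\,\rangle_{F'}^* \mathcal L_\psi\bigr)[r_2],
\]
I would apply proper base change on the Cartesian square
\[
\begin{array}{ccc}
E\times_S F^{\prime\vee} & \stackrel{g\times\mathrm{id}}{\longrightarrow} & F'\times_S F^{\prime\vee} \\
p_1^E\downarrow && \downarrow p_1^{F'} \\
E & \stackrel{g}{\longrightarrow} & F'
\end{array}
\]
to rewrite $p_1^{F',*}Rg_! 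K \cong R(g\times\mathrm{id})_! p_1^{E,*} K$, and then use the projection formula to bring the Artin--Schreier sheaf inside. The crucial computation is the pairing identity
\[
(g\times\mathrm{id})^*\langle\,,\,\rangle_{F'}^*\mathcal L_\psi \;\cong\; (\mathrm{id}\times g^\vee)^*\langle\,,\,\rangle_E^*\mathcal L_\psi
\]
on $E\times_S F^{\prime\vee}$, which is a direct translation of the elementary fact $\langle g(x),y\rangle_{F'} = \langle x, g^\vee(y)\rangle_E$ into the Artin--Schreier setting. After this substitution I would apply proper base change in the opposite direction, now along the smooth projection $p_2^E: E\times_S E^\vee\to E^\vee$ pulled back by $g^\vee$, to recognize the resulting expression as $g^{\vee,*}\mathscr F_\psi(K)[r_1]$. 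Comparing the two Fourier shifts $[r_2]$ and $[r_1]$ gives the shift $[r_2-r_1]$ claimed in the statement.

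The main obstacle is purely bookkeeping: verifying that under the isomorphisms $\pi$ and $\pi^\vee$, the pairings $\langle\,,\,\rangle_F$ (over $T$) and $\langle\,,\,\rangle_{F'}$ (over $S$) correspond, and that the two Fourier transforms therefore match. This is where the hypothesis that $\phi$ is an isomorphism is used in an essential way; without it, the formula would involve genuine base-change complexes rather than a clean pullback by $f^\vee$. Apart from this comparison, everything reduces to standard formalities: proper base change, the projection formula, and the adjointness identity for transpose maps of vector-bundle morphisms.
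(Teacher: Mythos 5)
Your proof is correct and takes essentially the same route as the paper's: factor $f=\pi\circ g$, handle the single-base morphism $g\colon E\to F\times_T S$ by the compatibility of the Fourier transform with $Rg_!$ and the transpose $g^\vee$, and dispose of the projection $\pi$ using that $\phi$ (hence $\pi$ and $\pi^\vee$) is an isomorphism, together with $g^\vee=f^\vee\pi^\vee$. The only difference is cosmetic: the paper simply cites \cite[1.2.3.5]{L} and \cite[1.2.2.4]{L} for the two compatibilities, whereas you unwind the definition of $\mathscr F_\psi$ and re-derive the single-base step from proper base change, the projection formula and the pairing identity $\langle g(x),y\rangle=\langle x,g^\vee(y)\rangle$ -- which is precisely Laumon's argument.
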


Let's describe the transpose $f^\vee:F^\vee\to E^\vee$. We have commutative diagrams 
$$\begin{array}{ccccccc}
E&\stackrel f \to & F&& F^\vee&\stackrel{f^\vee}\to& E^\vee\\
\downarrow&&\downarrow&&\downarrow&&\downarrow\\
S&\stackrel\phi\to &T,&&T&\stackrel{\phi^{-1}}\to& S.
\end{array}$$
Let $\mathcal E$ and $\mathcal E^\vee$ be the $\mathcal O_S$-modules of sections of $E\to S$ and $E^\vee\to S$, respectively, 
and let  $\mathcal F$ and $\mathcal F^\vee$ be the $\mathcal O_T$-modules of sections of $F\to T$ and $F^\vee\to T$, respectively. 
The morphism $f:E\to F$ is completely characterized by the homomorphism of $\mathcal O_T$-modules
$f^\sharp: \phi_\ast \mathcal E \to\mathcal F.$
For any open subset $V$ of $T$, let $U=\phi^{-1}(V)$. The homomorphism 
$f^\sharp_V:\mathcal E(U)\to \mathcal F(V)$ maps a section $s:U\to E$ of the vector bundle $E\to S$ to the section 
$fs\phi^{-1}:V\to F$ of the vector bundle $F\to T$. Similarly, the morphism $f^\vee:F^\vee\to E^\vee$ is completely characterized 
by the homomorphism of $\mathcal O_S$-modules
$f^{\vee\sharp}: \phi^{-1}_\ast \mathcal F^\vee \to\mathcal E^\vee,$
and the homomorphism 
$f^{\vee\sharp}_U:\mathcal F^\vee(V)\to \mathcal E^\vee(U)$ maps a section $t^\ast:V\to F^\vee$ of the vector bundle $F^\vee\to T$ 
to the section $f^\vee t^\ast \phi:U\to E^\vee$ of the vector bundle $E^\vee\to S$.  For any $s\in\mathcal E(U)$ and $t^\ast\in\mathcal 
F^\vee(V)$, we have
$$\phi_V^\natural(\langle f^\sharp_V(s), t^\ast \rangle)=\langle s, f^{\vee\sharp}_U(t^\ast)\rangle,$$
where $\langle\,,\,\rangle$ denotes the pairings $\mathcal E\times\mathcal E^\vee\to \mathcal O_S$ and $\mathcal F\times \mathcal F^\vee
\to \mathcal O_T$, and $\phi^\natural: \mathcal O_T\to \phi_\ast \mathcal O_S$ is the morphism on structure sheaves coming from the 
morphism $\phi:S\to T$. 

\medskip
\begin{proof} [Proof of Lemma \ref{small}]Let $g^\vee:F^\vee\times_TS\to E^\vee$ be the transpose of $g$, and 
let $\pi^\vee:F^\vee\times_T S\to F^\vee$ be the projection. 
Note that $\pi^\vee$ is an isomorphism and $g^\vee=f^\vee\pi^\vee$. By \cite[1.2.3.5 and 1.2.2.4]{L}, we have 
\begin{eqnarray*}
\mathscr F_\psi(Rf_!K)&\cong& \mathscr F_\psi(R\pi_!Rg_!K)\\
&\cong& R\pi^\vee_! \mathscr F_\psi(Rg_!K)\\
&\cong& R\pi^\vee_! g^{\vee\ast}\mathscr F_\psi(K)[r_2-r_1]\\
&\cong& f^{\vee\ast}\mathscr F_\psi(K)[r_2-r_1].
\end{eqnarray*}
\end{proof}

\begin{lemma} \label{action} Let $X$ be an $k$-scheme of finite type, $E\to X$ a vector bundle, and $G$ an algebraic group 
over $k$ acting on $X$
and acting equivariantly and linearly on the vector bundle $E\to X$.  Let $m:G\times_k E\to E$ be the action of $G$ on $E$, and let 
$m^\vee: G\times_k E^\vee\to E^\vee$ be the 
contragredient action of $G$ on $E^\vee$. For any  $K\in\mathrm{ob}\, D_c^b(E,\overline{\mathbb Q}_\ell)$, we have 
$$m^{\vee\ast}\mathscr F_\psi(K)\cong \mathscr F_\psi(m^\ast K),$$
where the righthand side is the Deligne-Fourier transform for the vector bundle $G\times_k E\to G\times_k X$. 
\end{lemma}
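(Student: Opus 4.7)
The plan is to reduce the statement to base change for $R(\cdot)_!$ combined with the $G$-invariance of the canonical pairing $E\times_X E^\vee \to \mathbb{A}^1_X$. Recall that the Deligne-Fourier transform is defined through the correspondence $E\xleftarrow{p_1} E\times_X E^\vee \xrightarrow{p_2} E^\vee$ and the pairing $e:E\times_X E^\vee\to\mathbb{A}^1$; the identity $\langle gv,g\xi\rangle=\langle v,\xi\rangle$ characterizing the contragredient action should translate directly into the desired equivariance of $\mathscr{F}_\psi$.

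Concretely, set $P=E\times_X E^\vee$ and $\tilde P=(G\times_k E)\times_{G\times_k X}(G\times_k E^\vee)\cong G\times_k P$, and let $\tilde p_1,\tilde p_2,\tilde e$ denote the analogues of $p_1,p_2,e$ for the vector bundle $G\times_k E\to G\times_k X$. Define the diagonal action $\mu_P:\tilde P\to P$ by $(g,v,\xi)\mapsto(gv,g\xi)$. I would then verify three compatibilities: (a) the square
$$\begin{array}{ccc} \tilde P & \stackrel{\mu_P}\to & P \\ \scriptstyle{\tilde p_2}\downarrow & & \downarrow\scriptstyle{p_2} \\ G\times_k E^\vee & \stackrel{m^\vee}\to & E^\vee \end{array}$$
is Cartesian; (b) $p_1\circ\mu_P=m\circ\tilde p_1$, which is immediate from the definitions; and (c) $e\circ\mu_P=\tilde e$, which is exactly the $G$-invariance of the canonical pairing.

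Granted (a)--(c), base change for $R(\cdot)_!$ applied to the Cartesian square in (a) gives $m^{\vee\ast}Rp_{2!}\cong R\tilde p_{2!}\,\mu_P^\ast$. Evaluating on $L=p_1^\ast K\otimes e^\ast\mathcal{L}_\psi$ and using (b), (c) to rewrite $\mu_P^\ast L\cong \tilde p_1^\ast m^\ast K\otimes \tilde e^\ast\mathcal{L}_\psi$, one obtains
$$m^{\vee\ast}\mathscr{F}_\psi(K)\cong R\tilde p_{2!}(\tilde p_1^\ast m^\ast K\otimes \tilde e^\ast\mathcal{L}_\psi)[r]=\mathscr{F}_\psi(m^\ast K),$$
where $r=\mathrm{rank}(E)$ and the final equality is the definition of the Fourier transform for the vector bundle $G\times_k E\to G\times_k X$.

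The main obstacle is (a): the fiber product $(G\times_k E^\vee)\times_{E^\vee} P$ must be identified with $\tilde P$ via a map that, on points, inverts the fiberwise action, sending $((g,\eta),(v,g\eta))$ to $(g,g^{-1}v,\eta)$. This uses invertibility of $g:E_y\to E_{gy}$ and is the one place where the linearity and equivariance of the $G$-action on $E\to X$ intervenes in an essential way. Once (a) is in place, the remaining manipulations are formal and follow the same pattern as the proofs of Lemmas \ref{equivariant} and \ref{small}.
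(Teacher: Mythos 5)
Your proposal is correct, and it follows a route genuinely different from the paper's. The paper factors the action map as $m=\pi\circ f^{-1}$, where $f:G\times_k E\to G\times_k E$, $(g,x)\mapsto(g,g^{-1}x)$, is an isomorphism over $G\times_k X$ and $\pi$ is the projection; correspondingly $m^\vee=\pi^\vee\circ f^\vee$, and the paper applies two previously established compatibilities of $\mathscr F_\psi$, namely compatibility with base change along $\pi^\vee$ (\cite[1.2.2.9]{L}) and Lemma~\ref{small} for the affine morphism $f$, then observes $Rf_!\pi^\ast K\cong m^\ast K$ since $f$ is an isomorphism with $\pi f^{-1}=m$. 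You instead unwind $\mathscr F_\psi$ as the kernel transform along the correspondence $E\leftarrow E\times_X E^\vee\to E^\vee$ and reduce everything to a single application of proper base change for $Rp_{2!}$, having verified the Cartesian square (a) together with the compatibilities (b) and (c). The untwisting isomorphism $(g,g^{-1}v,\eta)\leftrightarrow((g,\eta),(v,g\eta))$ you use to identify the fiber product with $\tilde P$ is exactly the same algebraic datum that the paper packages into the isomorphism $f$, so the two proofs contain the same computation at their core; your version is more self-contained (it does not invoke Lemma~\ref{small}) at the cost of re-deriving base-change compatibility of the transform from scratch, while the paper's factorization is shorter given those lemmas and isolates $f$ as a reusable piece. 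Both are complete; your verification of (a)--(c) is correct, including the observation that (a) is precisely where the linearity and equivariance of the $G$-action on $E$ enter.
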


\begin{proof} Let $f:G\times_k E\to G\times_k E$ be the isomorphism $(g,x)\mapsto (g, g^{-1}x)$, and let $\phi:G\times_k X\to G\times_k X$ be
the isomorphism defined in the same way. The transpose of $f$ as described in Lemma \ref{small} 
is exactly the morphism $$f^\vee: G\times_k E^\vee\to G\times_k E^\vee, \quad (g,x)\mapsto (g,gx).$$
Let  $\pi:G\times_k E\to E$ and $\pi^\vee: G\times_k E^\vee\to E^\vee$ be the projections.  Since  $m^\vee=\pi^\vee f^\vee$, we have
$$m^{\vee\ast} \mathscr F_\psi(K) \cong f^{\vee\ast}\pi^{\vee\ast} \mathscr F_\psi(K).$$
Since the Deligne-Fourier transform commutes with base change (\cite[1.2.2.9]{L}), we have 
$$\pi^{\vee\ast} \mathscr F_\psi(K)\cong \mathscr F_\psi(\pi^\ast K).$$ 
By Lemma \ref{small}, we have
$$f^{\vee\ast}\mathscr  F(\pi^\ast K)\cong  \mathscr F_\psi(Rf_! \pi^\ast K).$$ 
We thus have 
$$m^{\vee\ast} \mathscr F_\psi(K) \cong \mathscr F_\psi(Rf_! \pi^\ast K).$$ 
Since $f$ is an isomorphism and $\pi f^{-1}=m$, we have $$Rf_! \pi^\ast K\cong m^\ast K.$$
Our assertion follows. 
\end{proof}

\begin{lemma} \label{Kunneth} Let $S$ be an $k$-scheme of finite type, let $E\to S$ be a vector bundle, 
$Y\to S$ a morphism of finite type, and $K\in \mathrm{ob} \, D_c^b(E, \overline{\mathbb Q}_\ell)$ and 
$L\in \mathrm{ob} D_c^b(Y, \overline{\mathbb Q}_\ell)$.
Then we have $$\mathscr F_\psi(K\boxtimes L)\cong \mathscr F_\psi(K)\boxtimes L,$$ where on the lefthand side the Deligne-Fourier transform is for the vector bundle $E\times_SY\to Y$, and on the righthand side it is for the vector bundle $E\to S$.
\end{lemma}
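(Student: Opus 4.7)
The plan is to unwind the definitions of the two Deligne-Fourier transforms in play and reduce the claimed isomorphism to an application of proper base change together with the projection formula. First I would set up the common fibered product. The dual of $E\times_S Y\to Y$ is $E^\vee\times_S Y\to Y$, and the fibered product $(E\times_S Y)\times_Y (E^\vee\times_S Y)$ is canonically identified with $W:=E\times_S E^\vee\times_S Y$. Write $q_1:W\to E\times_S E^\vee$ and $q_2:W\to Y$ for the two projections, $p_1,p_2,\langle\,,\,\rangle$ for the projections and the pairing on $E\times_S E^\vee$, and $\mathrm{pr}_Y,\mathrm{pr}^\vee_Y$ for the projections of $W$ onto $E\times_S Y$ and $E^\vee\times_S Y$. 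Since $\mathcal L_\psi$ over $\mathbb A^1_Y$ is pulled back from $\mathcal L_\psi$ over $\mathbb A^1$, and the pairing on $W$ factors through $q_1$, a direct check gives
\[
\mathrm{pr}_Y^\ast(K\boxtimes L)\otimes\langle\,,\,\rangle_Y^\ast\mathcal L_\psi\;\cong\; q_1^\ast\bigl(p_1^\ast K\otimes\langle\,,\,\rangle^\ast\mathcal L_\psi\bigr)\otimes q_2^\ast L.
\]

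Next I would exploit the Cartesian square
\[
\begin{array}{ccc}
W&\stackrel{q_1}\longrightarrow & E\times_S E^\vee\\
{\scriptstyle \mathrm{pr}^\vee_Y}\downarrow && \downarrow{\scriptstyle p_2}\\
E^\vee\times_S Y&\stackrel{p_{E^\vee}}\longrightarrow & E^\vee,
\end{array}
\]
where $p_{E^\vee}$ is the first projection of $E^\vee\times_S Y$. The factorization $q_2=p_Y^\vee\circ\mathrm{pr}^\vee_Y$, where $p_Y^\vee:E^\vee\times_SY\to Y$ is the second projection, lets me write $q_2^\ast L\cong\mathrm{pr}^{\vee\ast}_Y p_Y^{\vee\ast} L$. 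Then proper base change for $R_!$ and the projection formula combine to give
\[
R\mathrm{pr}^\vee_{Y,!}\bigl(q_1^\ast M\otimes\mathrm{pr}^{\vee\ast}_Y p_Y^{\vee\ast} L\bigr)\;\cong\; p_{E^\vee}^\ast\bigl(Rp_{2,!}M\bigr)\otimes p_Y^{\vee\ast}L
\]
for $M=p_1^\ast K\otimes\langle\,,\,\rangle^\ast\mathcal L_\psi$. Applying the shift $[r]$ (both bundles have the same rank $r$), the right-hand side is exactly $\mathscr F_\psi(K)\boxtimes L$, which finishes the argument.

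I expect no serious obstacle here: the proof is purely formal, relying only on standard compatibilities of $R_!$ with base change and tensor product, together with the fact that the Artin-Schreier sheaf pulls back compatibly along the various projections to $\mathbb A^1$. The main thing to watch is the bookkeeping of the many morphisms involved and checking that the identifications of dual bundles and pairings are coherent across the Cartesian diagram.
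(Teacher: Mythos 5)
Your proposal is correct and takes essentially the same route as the paper. The paper's own proof is a one-line remark invoking the definition of the Deligne-Fourier transform and the Künneth formula; what you have written is a careful unwinding of exactly that, reducing the Künneth step to proper base change plus the projection formula applied to the Cartesian square you exhibit.
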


\begin{proof} This follows from the definition of the Deligne-Fourier transform and the K\"unneth formula. 
\end{proof}

\begin{lemma}\label{tame} 

(i) Any multiplicative sheaf $\mathcal L$ on 
$\mathbb G_{m,k}$ is tamely ramified at $0$ and $\infty$. 

(ii) If a multiplicative sheaf on $\mathbb G_{m,k}$ is trivial on $\mathbb G_{m,\bar k}$, then it is trivial. 
\end{lemma}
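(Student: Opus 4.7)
The plan is to exploit the classification of multiplicative sheaves on $\mathbb G_m$ by characters of $\pi_1^{\mathrm{isogeny}}(\mathbb G_m)$ recalled in the introduction, combined with the canonical trivialization $e^\ast\mathcal L\cong\overline{\mathbb Q}_\ell$ that the multiplicative structure provides at the unit section.

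For (i), since tameness at $0$ and $\infty$ can be checked after base change to $\bar k$, I would first reduce to the case where $k$ is algebraically closed. Then any multiplicative sheaf is $\mathcal L_\beta$ for some character $\beta:\pi_1^{\mathrm{isogeny}}(\mathbb G_{m,\bar k})\to\overline{\mathbb Q}_\ell^\ast$, and the central step is to identify this group. Since $\mathrm{End}(\mathbb G_m)=\mathbb Z$, the only group-scheme isogenies of $\mathbb G_m$ are the Kummer maps $[n]:x\mapsto x^n$, and $[n]$ is \'etale precisely when $(n,p)=1$; hence $\pi_1^{\mathrm{isogeny}}(\mathbb G_{m,\bar k})=\varprojlim_{(n,p)=1}\mu_n(\bar k)$. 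Each such Kummer cover extends to a cover of $\mathbb P^1$ ramified only at $0$ and $\infty$ with ramification indices coprime to $p$, so it is tamely ramified. Any character of this inverse limit therefore gives a tame sheaf.

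For (ii), let $\mathcal L$ be a multiplicative sheaf on $\mathbb G_{m,k}$ whose pullback to $\mathbb G_{m,\bar k}$ is trivial. Viewing $\mathcal L$ as a continuous character $\rho:\pi_1(\mathbb G_{m,k})\to\overline{\mathbb Q}_\ell^\ast$ and using the homotopy exact sequence
\[
1\to\pi_1(\mathbb G_{m,\bar k})\to\pi_1(\mathbb G_{m,k})\to\mathrm{Gal}(\bar k/k)\to 1,
\]
the hypothesis implies that $\rho$ is trivial on the geometric fundamental group, so it factors through $\mathrm{Gal}(\bar k/k)$. Equivalently, $\mathcal L\cong a^\ast\mathcal M$, where $a:\mathbb G_{m,k}\to\mathrm{Spec}\,k$ is the structure morphism and $\mathcal M$ is a rank one lisse sheaf on $\mathrm{Spec}\,k$. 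Pulling back by the unit section $e$ yields $e^\ast\mathcal L\cong\mathcal M$; but the isomorphism $\theta$ in the definition of a multiplicative sheaf, restricted to $(e,e)$, provides a canonical isomorphism $e^\ast\mathcal L\cong\overline{\mathbb Q}_\ell$. Hence $\mathcal M$ is trivial and so is $\mathcal L$.

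The main obstacle, and it is a minor one, is the computation of $\pi_1^{\mathrm{isogeny}}(\mathbb G_{m,\bar k})$ in (i) --- verifying that only Kummer maps appear as group isogenies of $\mathbb G_m$ and that they are all tame. Once this is granted, both parts are essentially formal consequences of the discussion preceding the lemma.
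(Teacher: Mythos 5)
Your part (ii) is essentially identical to the paper's: trivial on the geometric fiber means the sheaf is pulled back from $\mathrm{Spec}\,k$, and the canonical trivialization $e^\ast\mathcal L\cong\overline{\mathbb Q}_\ell$ coming from the multiplicative structure then forces the sheaf on the base to be trivial. No comment needed there.

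For part (i), your route is genuinely different from the one the paper spells out. You invoke the classification of multiplicative sheaves by characters of $\pi_1^{\mathrm{isogeny}}$ (stated without proof in the introduction, cited to Moret-Bailly and Guignard), and then compute $\pi_1^{\mathrm{isogeny}}(\mathbb G_{m,\bar k})=\varprojlim_{(n,p)=1}\mu_n(\bar k)$ by observing that the only connected isogenies onto $\mathbb G_m$ are the prime-to-$p$ Kummer maps, each of which is tame at $0$ and $\infty$. This is correct, provided you also justify that any $1$-dimensional connected commutative group admitting an isogeny onto $\mathbb G_m$ is itself $\mathbb G_m$ (neither $\mathbb G_a$ nor an elliptic curve has a nonconstant homomorphism to $\mathbb G_m$), which you gesture at but do not say explicitly; and indeed this route is likely close to the cited reference \cite[Lemma 3.5]{G}. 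The paper's own displayed proof (attributed to Haoyu Hu) avoids the classification theorem entirely: it works directly from the multiplicativity axiom to deduce $[n]^\ast\mathcal L\cong\mathcal L^{\otimes n}$, takes $n=p^m$ with $p^m=|\rho(P)|$ to kill the image of wild inertia, and concludes using the fact that $[p^m]$ is radiciel and hence an equivalence on \'etale sites. The trade-off is clear: your argument is more structural and yields the full description of all multiplicative sheaves on $\mathbb G_m$ as Kummer sheaves (which the paper actually uses a few lines later in the proof of Lemma~\ref{fiber}), while the paper's argument is more elementary and self-contained, not relying on the classification being already established.
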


\begin{proof} (i) This is proved in \cite[Lemma 3.5]{G}. For completeness, we include another proof, which 
is communicated to me by Haoyu Hu. By base change, 
we may assume $k$ is algebraically closed. Let 
$\bar\eta$ be a geometric generic point of $\mathbb G_m$, 
let $\rho: \pi_1(\mathbb G_m, \bar\eta)\to \overline{\mathbb Q}_\ell^*$ be the character defined by $\mathcal L$, and 
let $P$ be the wild inertia subgroup at $0$ or at $\infty$. The group $P$ is a pro-$p$-group and $\rho(P)$ is finite. 
Let $p^m$ be the number of elements in $\rho(P)$. Then we have $\rho^{p^m}|_{P}=1$. 
Let $$m_n: \mathbb G_{m, k}^n\to \mathbb G_{m, k}$$ be the morphism defined by multiplication. We have 
$$m_n^*\mathcal L\cong \boxtimes^n \mathcal L.$$ 
Pulling back this isomorphism by the diagonal morphism $\mathbb G_{m,k}\to  \mathbb G_{m, k}^n$, 
we get 
$$[n]^*\mathcal L\cong \mathcal L^{\otimes n},$$
where $[n]:\mathbb G_{m,k}\to \mathbb G_{m, k}$ is the morphism $x\mapsto x^n$. 
In particular, the character corresponding to the sheaf $[p^m]^*\mathcal L$ is $\rho^{p^m}$. Since $\rho^{p^m}|_{P}=1$, 
$[p^m]^*\mathcal L$  is tamely ramified at $0$ and $\infty$. But $[p^m]$ is a finite surjective radiciel morphism, and it has 
no effect on \'etale topology. So $\mathcal L$ is tame at $0$ and $\infty$. 

(ii) Let $\mathcal L$ be a multiplicative sheaf on  $\mathbb G_{m,k}$ which is trivial on  $\mathbb G_{m,\bar k}$. Then $\mathcal L$
is isomorphic to the inverse image of a sheaf on $\mathrm{Spec}\,k$. But the restriction of $\mathcal L$ to the unit section $1:\mathrm{Spec}\, k\to 
\mathbb G_{m, k}$
is trivial. So we have $\mathcal L\cong\overline{\mathbb Q}_\ell$. 
\end{proof}

\begin{lemma} \label{basechange} $Rj_{\mathbb L_{G/P}^\circ, \ast}\mathcal L_\beta$ commutes with any base change 
$Y\to G/P$. 
\end{lemma}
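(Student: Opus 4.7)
My plan is to reduce the base-change compatibility to a manifest Künneth decomposition of the restricted sheaf. The claim is local on $\mathbb L_{G/P}$, hence local on $G/P$, so since line bundles are Zariski-locally trivial I may replace $G/P$ by an open subset $S$ on which $\mathbb L_{G/P}$ admits a trivializing section $s: S\to \mathbb L_{G/P}^\circ$. In this local picture $\mathbb L_{G/P}|_S = \mathbb A^1_S$ and $\mathbb L_{G/P}^\circ|_S = \mathbb G_{m,S}$, and the trivialization is the orbit map $(t,y)\mapsto t\cdot s(y)$.

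The second step exploits the $\mathbb G_m$-equivariance of $\mathcal L_\beta$ on $\mathbb L_{G/P}^\circ$, which is a special case of Lemma \ref{equivariant} applied to the direct factor $\mathbb G_m\subset G$. Pulling the equivariance isomorphism $\mu^*\mathcal L_\beta \cong \mathcal L_\beta|_{\mathbb G_m}\boxtimes \mathcal L_\beta$ back along $(\mathrm{id}_{\mathbb G_m},\, s)\colon \mathbb G_{m,S}\to \mathbb G_m\times \mathbb L_{G/P}^\circ$ should give a canonical identification
$$\mathcal L_\beta|_{\mathbb G_{m,S}}\cong \mathcal L_\beta|_{\mathbb G_m}\boxtimes s^*\mathcal L_\beta,$$
where $\mathcal L_\beta|_{\mathbb G_m}$ is the Kummer-type multiplicative sheaf on $\mathbb G_m$ already introduced in the paper. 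With this external-product form in hand, I would invoke smooth base change for the smooth projection $\mathbb A^1_S\to \mathbb A^1$ together with the projection formula to conclude
$$Rj_*\bigl(\mathcal L_\beta|_{\mathbb G_m}\boxtimes s^*\mathcal L_\beta\bigr)\cong \bigl(Rj_{0,*}\mathcal L_\beta|_{\mathbb G_m}\bigr)\boxtimes s^*\mathcal L_\beta,\qquad j_0\colon \mathbb G_m\hookrightarrow \mathbb A^1.$$
Since both factors of an external product pull back functorially under arbitrary base change on $S$, the commutation with base change drops out tautologically.

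The main obstacle I anticipate is making the Künneth decomposition in the second step rigorous — namely, verifying that pulling back the equivariance isomorphism through $(\mathrm{id}_{\mathbb G_m}, s)$ really produces the claimed external product and not a twist of it by some auxiliary cocycle coming from the associativity diagram of Lemma \ref{equivariant}. Tameness of $\mathcal L_\beta|_{\mathbb G_m}$ at $0$ and $\infty$ (Lemma \ref{tame}) does not appear to be needed for this argument, since smooth base change for $\overline{\mathbb Q}_\ell$-sheaves with $\ell\ne p$ is unconditional.
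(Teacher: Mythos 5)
The localization to a trivializing open $S\subset G/P$ and the use of $\mathbb G_m$-equivariance to get the external-product form
$$\mathcal L_\beta|_{\mathbb G_{m,S}}\cong \mathcal L_\beta|_{\mathbb G_m}\boxtimes s^*\mathcal L_\beta$$
are sound, and in fact this is exactly the same reduction the paper makes (the paper phrases it by restricting to the generic point of $G/P$ and observing the result is a multiplicative sheaf on $\mathbb G_{m,\eta}$). The gap is in the last step. The statement to be proved concerns base change along an \emph{arbitrary} morphism $Y\to G/P$, and after localizing, arbitrary $Y\to S$. The Cartesian square one must base-change through is
$$\begin{array}{ccc}
\mathbb G_{m,Y}&\longrightarrow&\mathbb G_m\\
\downarrow j_Y&&\downarrow j_0\\
\mathbb A^1_Y&\stackrel{p_1^Y}\longrightarrow&\mathbb A^1,
\end{array}$$
and to identify $Rj_{Y,*}\bigl(\mathcal L_\beta|_{\mathbb G_m}\boxtimes (s^*\mathcal L_\beta)|_Y\bigr)$ with $\bigl(Rj_{0,*}\mathcal L_\beta|_{\mathbb G_m}\bigr)\boxtimes (s^*\mathcal L_\beta)|_Y$, you would need (after the projection formula) the base-change isomorphism $p_1^{Y,*}Rj_{0,*}\cong Rj_{Y,*}q_1^{Y,*}$. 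But $p_1^Y\colon\mathbb A^1_Y\to\mathbb A^1$ is smooth only when $Y$ is smooth over $k$; smooth base change is available only for the pullback along the smooth map $\mathbb A^1_S\to\mathbb A^1$, which lets you establish the external-product formula \emph{over} $S$ but says nothing about pulling it further back to a non-smooth $Y$. The functoriality of $\boxtimes$ under base change on $S$, which you invoke as the final tautology, only tells you what the left-hand side restricts to on $\mathbb A^1_Y$; it does not tell you that $Rj_{Y,*}(\mathcal L_\beta|_{\mathbb G_{m,Y}})$ agrees with that restriction, which is precisely the statement to be proved.

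This is where tameness is genuinely needed, contrary to your closing remark. The missing input is that $Rj_{0,*}\mathcal L_\beta|_{\mathbb G_m}$ on $\mathbb A^1$ commutes with arbitrary base change $Y\to\mathrm{Spec}\,k$, and the standard theorem supplying this — SGA~7 XIII 2.1.10, which the paper cites — applies precisely because $\mathcal L_\beta|_{\mathbb G_m}$ is tamely ramified at $0$ (Lemma \ref{tame}). For a wildly ramified sheaf on $\mathbb G_m$, $Rj_{0,*}$ does not in general commute with non-smooth base change. So the structure of your argument is right and parallels the paper's, but the appeal to unconditional smooth base change in place of the tame base-change theorem is a genuine error, not merely an omitted citation.
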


\begin{proof} Note that $\mathbb L^\circ_{G/P}\to G/P$ is a principal homogeneous space 
for the multiplicative group-scheme $\mathbb G_{m, G/P}$ over $G/P$, and
$\mathbb L_{G/P}\to G/P$ is 
the associated line bundle for the canonical action of $\mathbb G_m$ on $\mathbb A^1$. Let $\{U_i\}$ be a Zariski open covering of $G/P$
such that the 
line bundle $\mathbb L_{G/P}$ is trivial over each $U_i$, and we compactify $\mathbb L_{U_i}^\circ$ to a $\mathbb P^1$-bundle 
by adding the $0$ section and the $\infty$ section.  
The sheaf  $\mathcal L_\beta$
is homogeneous on $\mathbb L^\circ_{G/P}$ with respect to the $\mathbb G_{m, G/P}$-action. 
In particular, when restricted to the generic point $\eta$ of $G/P$, $\mathcal L_\beta|_{\mathbb L^\circ_\eta}$ is a multiplicative sheaf on 
$\mathbb G_{m,\eta}$. By Lemma \ref{tame}, $\mathcal L_\beta|_{\mathbb L^\circ_\eta}$ is tamely ramified at $0$ and
$\infty$. This implies that $\mathcal L_\beta|_{\mathbb L_{U_i}^\circ}$ is tamely ramified
at the $0$ section and the $\infty$ section. Our assertion then follows from \cite[XIII 2.1.10]{SGA7}.  
\end{proof}

Let $G_!(\beta,\psi)$ and $G_*(\beta, \psi)$ be the sheaf on $\mathrm{Spec}\,k$ defined 
in Theorem \ref{mainthm}.

\begin{lemma} \label{fiber}  Regard $v^\ast$ as a $k$-point in the fiber of ${\mathbb L}^{\vee}_{G/P,eP}\cong 
{\mathbb L}^\vee_{[v]}$, and 
let $0_{[v]}$ be the zero element in the line ${\mathbb L}^{\vee}_{G/P,eP}\cong {\mathbb L}^{\vee}_{[v]}$. We have 
\begin{eqnarray*}
\mathscr F_\psi(j_{\mathbb L_{G/P}^\circ, !} \mathcal L_\beta[1])|_{v^\ast}&\cong& G_!(\beta,\psi)[1], \\
\mathscr F_\psi(Rj_{\mathbb L_{G/P}^\circ, \ast} \mathcal L_\beta[1])|_{v^\ast}&\cong& G_*(\beta,\psi)[1],\\
\mathscr F_\psi(Rj_{\mathbb L_{G/P}^\circ, \ast} \mathcal L_\beta[1])|_{0_{[v]}}&\cong& 0.
\end{eqnarray*} 
Moreover $G_!(\beta,\psi)$ and $G_*(\beta,\psi)$ are of rank $1$. 
\end{lemma}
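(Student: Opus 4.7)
The plan is to use base change to reduce the computation to the fiber of the line bundle $\mathbb L_{G/P}\to G/P$ over $eP$, and then to compute the Deligne-Fourier transform on $\mathbb L_{[v]}\cong \mathbb A^1_k$ directly, splitting according to whether $\mathcal L_\beta|_{\mathbb G_m}$ is trivial.

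First I would invoke that $j_!$ commutes trivially with base change and that Lemma \ref{basechange} gives the same for $Rj_{\mathbb L_{G/P}^\circ,\ast}\mathcal L_\beta$; combined with the base-change property of the Deligne-Fourier transform (\cite[1.2.2.9]{L}), this identifies the restrictions to the fiber over $eP$ with the fiberwise Fourier transforms of $j_!(\mathcal L_\beta|_{\mathbb G_m})$ and $Rj_\ast(\mathcal L_\beta|_{\mathbb G_m})$ on $\mathbb A^1_k$. Under the identification $\mathbb L_{[v]}\cong\mathbb A^1_k$, $t\mapsto tv$, the functional $v^\ast$ is the identity coordinate, so the Fourier pairing at $v^\ast$ pulls $\mathcal L_\psi$ back to the standard Artin-Schreier sheaf, while at $0_{[v]}$ it becomes the constant sheaf.

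By Lemma \ref{tame}, $\mathcal L_\beta|_{\mathbb G_m}$ is tame everywhere, and is nontrivial iff it is geometrically so. In the nontrivial case its restriction to the tame inertia at $0$ is nontrivial, so $(Rj_\ast\mathcal L_\beta)_0=R\Gamma(I_0,\mathcal L_{\beta,\bar\eta})=0$, hence $Rj_\ast\cong j_!$ on $\mathbb A^1$; this reduces every $Rj_\ast$-statement to the corresponding $j_!$-statement. The remaining two computations are then $R\Gamma_c(\mathbb G_m,\mathcal L_\beta\otimes\mathcal L_\psi)[2]$ at $v^\ast$ and $R\Gamma_c(\mathbb G_m,\mathcal L_\beta)[2]$ at $0_{[v]}$: in both, $H^0_c=0$ because the coefficient sheaf is lisse on the non-proper variety $\mathbb G_m$, and $H^2_c=0$ by nontriviality of the coefficient sheaf (wildly ramified at $\infty$ in the first case, nontrivial and tame in the second). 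Grothendieck-Ogg-Shafarevich yields $\chi_c=-1$ in the first case (from the Swan conductor $1$ of $\mathcal L_\psi$ at $\infty$) and $\chi_c=0$ in the second, leaving a rank-$1$ $H^1_c=G_!(\beta,\psi)$ in the first case and total vanishing in the second.

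In the trivial case $\mathcal L_\beta|_{\mathbb G_m}\cong\overline{\mathbb Q}_\ell$. At $v^\ast$, I would use $R\Gamma_c(\mathbb A^1,\mathcal L_\psi)=0$ and the excision triangle $j_!\mathcal L_\psi\to\mathcal L_\psi\to i_{0\ast}\overline{\mathbb Q}_\ell\to$ to obtain $R\Gamma_c(\mathbb G_m,\mathcal L_\psi)\cong\overline{\mathbb Q}_\ell[-1]$, matching $G_!(\beta,\psi)[1]$ after the shift; $G_\ast(\beta,\psi)$ then follows by Poincar\'e duality, and the rank-$1$ statement is immediate. The hard part is $0_{[v]}$: one cannot reduce to $j_!$, since $(Rj_\ast\overline{\mathbb Q}_\ell)_0\cong\overline{\mathbb Q}_\ell\oplus\overline{\mathbb Q}_\ell(-1)[-1]$ is nonzero, and unwinding the resulting spectral sequence requires identifying a non-obvious connecting differential. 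I would avoid this by invoking Verdier duality: $D_{\mathbb A^1}(Rj_\ast\overline{\mathbb Q}_\ell|_{\mathbb G_m})\cong j_!\overline{\mathbb Q}_\ell(1)[2]$, while the triangle $j_!\overline{\mathbb Q}_\ell\to\overline{\mathbb Q}_\ell\to i_{0\ast}\overline{\mathbb Q}_\ell\to$ on $\mathbb A^1$ together with the isomorphism $H^\ast(\mathbb A^1,\overline{\mathbb Q}_\ell)\stackrel{\sim}{\to} H^\ast(\{0\},\overline{\mathbb Q}_\ell)$ shows $R\Gamma(\mathbb A^1,j_!\overline{\mathbb Q}_\ell)=0$; dualizing gives $R\Gamma_c(\mathbb A^1,Rj_\ast\overline{\mathbb Q}_\ell|_{\mathbb G_m})=0$. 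This trivial-character case at $0_{[v]}$ is the main obstacle.
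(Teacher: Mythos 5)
Your overall strategy matches the paper's: reduce by base change to $\mathbb{A}^1_k$, split into the cases where $\mathcal{L}_\beta|_{\mathbb{G}_{m,\bar k}}$ is nontrivial (where $j_!\cong Rj_*$) or trivial, and compute $R\Gamma_c$ on $\mathbb{G}_m$ using $H^0_c=H^2_c=0$ and Grothendieck--Ogg--Shafarevich. The nontrivial case is handled the same way in both, and is fine.

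In the trivial case there is one spot where you are faster than the paper but also one spot where you are glossing over a real point. For the value of $\mathscr{F}_\psi(Rj_*\overline{\mathbb{Q}}_\ell[1])$ at $v^\ast$ you write ``$G_*(\beta,\psi)$ then follows by Poincar\'e duality,'' but this needs more than the definitional relation $G_*\cong G_!^\vee(-1)$: you need to know that the Verdier dual of $\mathscr{F}_{\psi^{-1}}(j_!\overline{\mathbb{Q}}_\ell[1])$ is computed stalkwise at $1$, which requires showing this Fourier transform is lisse (in a single degree) on $\mathbb{A}^1-\{0\}$. The paper does this by citing \cite[2.3.1.3]{L}; without some such step, a punctual contribution at $1$ is not a priori excluded. (It \emph{is} easy to check directly, since the stalks of $\mathscr{F}_{\psi^{-1}}(j_!\overline{\mathbb{Q}}_\ell[1])$ are $R\Gamma_c(\mathbb{G}_{m,\bar k},\mathcal{L}_{\psi^{-1}(ax)})[2]\cong\overline{\mathbb{Q}}_\ell[1]$ for all $a\ne 0$, and perversity then gives lissity --- but it should be said.)

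On the other hand, your treatment of $0_{[v]}$ in the trivial case is genuinely simpler than the paper's. The paper compactifies $\mathbb{G}_m$ to $\mathbb{P}^1$, then reduces the vanishing of $R\Gamma_c(\mathbb{A}^1_{\bar k}, Rj_*\overline{\mathbb{Q}}_\ell)$ to comparing $R\Gamma(\mathbb{G}_{m,\bar k},\overline{\mathbb{Q}}_\ell)$ with $(RJ_*\overline{\mathbb{Q}}_\ell)_{\overline\infty}$, which in turn is checked via the Kummer sequence and the strict henselization at $\infty$. Your route --- $D_{\mathbb{A}^1}(Rj_*\overline{\mathbb{Q}}_\ell)\cong j_!\overline{\mathbb{Q}}_\ell(1)[2]$, then $R\Gamma(\mathbb{A}^1_{\bar k}, j_!\overline{\mathbb{Q}}_\ell)=0$ from the excision triangle and the contractibility isomorphism $R\Gamma(\mathbb{A}^1_{\bar k},\overline{\mathbb{Q}}_\ell)\xrightarrow{\sim} R\Gamma(\{0\},\overline{\mathbb{Q}}_\ell)$, then global Verdier duality $R\Gamma_c(\mathbb{A}^1_{\bar k},Rj_*\overline{\mathbb{Q}}_\ell)\cong D\,R\Gamma(\mathbb{A}^1_{\bar k},j_!\overline{\mathbb{Q}}_\ell(1)[2])=0$ --- is correct and avoids the detour through $\mathbb{P}^1$ and the henselization entirely; this is a cleaner way to close the argument.
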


\begin{proof} With respect to the morphism
$\mathrm{Spec}\,k\to {G/P}$ defined by the rational point $eP$, 
the base change of the morphism $j_{\mathbb L_{G/P}^\circ} :\mathbb L_{G/P}^\circ\to \mathbb L_{G/P}$  
can be identified the canonical open immersion $j: \mathbb G_{m,k}\hookrightarrow \mathbb A^1_k$. 
By Lemma \ref{basechange}, the base change of 
$Rj_{\mathbb L_{G/P}^\circ, \ast} \mathcal L_\beta[1]$ can be identified with $Rj_\ast (\mathcal L_\beta|_{\mathbb G_{m,k}})[1]$. 
The Deligne-Fourier transform and $j_{\mathbb L_{G/P}^\circ, !} \mathcal L_\beta[1]$ also commute with base change (\cite[1.2.2.9]{L}). 
So we are reduced to show 
\begin{eqnarray*}
\mathscr F_\psi(j_!(\mathcal L_\beta|_{\mathbb G_{m,k}})[1])|_{1}&\cong& G_*(\beta,\psi)[1],\\
\mathscr F_\psi(Rj_\ast (\mathcal L_\beta|_{\mathbb G_{m,k}})[1])|_{1}&\cong& G_!(\beta,\psi)[1],\\
\mathscr F_\psi(Rj_\ast (\mathcal L_\beta|_{\mathbb G_{m,k}})[1])|_{0}&\cong& 0,
\end{eqnarray*}
where the lefthand side are restrictions to the unit section $1:\mathrm{Spec}\,k\to \mathbb A^1_k$ and the zero section
$0:\mathrm{Spec}\,k\to \mathbb A^1_k$. 

For convenience, denote $\mathcal L_\beta|_{\mathbb G_{m,k}}$ by $\mathcal L$. 
First consider the case where $\mathcal L|_{\mathbb G_{m,\bar k}}$ is nontrivial. In this case, we claim that 
$(Rj_\ast \mathcal L)_{\bar 0}=0$ so that we have $j_!\mathcal L\cong Rj_*\mathcal L$. 
Indeed, by Lemma \ref{tame}, $\mathcal L|_{\mathbb G_{m,\bar k}}$ is a nontrivial multiplicative sheaf 
on $\mathbb G_{m,\bar k}$. It is necessarily isomorphic to a nontrivial Kummer sheaf $\mathcal K_\chi$ for a character
$\chi: \varprojlim_{(n,p)=1} \mu_n(\bar k) \to\overline{\mathbb Q}_\ell^*$. The invariant and coinvariant of the inertia subgroup $I_0$ at $0$ for the 
representation of $\pi_1(\mathbb G_{m,\bar k})$ corresponding to $\mathcal K_\chi$ are trivial. Our claim follows
the calculation in \cite[Dualit\'e 1.3]{SGA4.5}. So it suffices to prove 
$$
\mathscr F_\psi(j_!\mathcal L[1])|_{1}\cong G_!(\beta,\psi)[1], \quad
\mathscr F_\psi(j_! \mathcal L[1])|_{0}\cong 0.$$
By
definition, we have 
$$\mathscr F_\psi(j_!\mathcal L[1])_{\bar 1}\cong R\Gamma_c(\mathbb G_{m,\bar k}, \mathcal L\otimes\mathcal L_\psi)[2].$$ 
We have 
$$H_c^0(\mathbb G_{m,\bar k}, \mathcal L\otimes\mathcal L_\psi)=0$$ since $\mathbb G_{m,\bar k}$ is an affine curve. 
We have 
$$H_c^2(\mathbb G_{m,\bar k}, \mathcal L\otimes\mathcal L_\psi)\cong 
H^0(\mathbb G_{m,\bar k}, \mathcal L^{-1}\otimes\mathcal L_{\psi^{-1}})(-1)=0$$
by Poincar\'e duality and the fact that the invariant of the 
representation of $\pi_1(\mathbb G_{m,\bar k})$ corresponding to $\mathcal L\otimes \mathcal L_\psi$ is trivial.
By the Grothendieck-Ogg-Shafarevich formula and the fact that $\mathcal L\otimes \mathcal L_\psi$ is lisse on 
$\mathbb G_{m,\bar k}$, tame at $0$ and with Swan conductor $1$ at $\infty$, we have
$$\chi_c(\mathbb G_{m,\bar k}, \mathcal L\otimes\mathcal L_\psi)=-1.$$
So $H^1_c(\mathbb G_{m,\bar k},\mathcal L\otimes \mathcal L_\psi)$ is of rank $1$. Hence 
$$\mathscr F_\psi(j_!(\mathcal L_\beta|_{\mathbb G_{m,k}})[1])_{1}\cong G_!(\beta,\psi)[1],$$ 
and $G_*(\beta,\psi)$
is of rank 1. Similarly one can prove $H^1_c(\mathbb G_{m,\bar k},\mathcal L)=0$ for all $i$. 
Hence $R\Gamma_c(\mathbb G_{m,\bar k}, \mathcal L)=0$. So we have 
$$\mathscr F_\psi(j_!\mathcal L[1])_{\bar 0}\cong R\Gamma_c(\mathbb G_{m,\bar k}, \mathcal L)[2]=0.$$ 

Next, suppose $\mathcal L_{\mathbb G_{m,\bar k}}$ is trivial. By Lemma \ref{tame} (ii), we have 
$\mathcal L\cong\overline{\mathbb Q}_\ell$. So 
we have 
$$\mathscr F_\psi(j_!\mathcal L[1])_{\bar 1}\cong 
R\Gamma_c(\mathbb G_{m,\bar k}, \mathcal L_\psi)[2].$$ We have a distinguished triangle 
$$R\Gamma_c(\mathbb G_{m,\bar k}, \mathcal L_\psi)\to R\Gamma_c(\mathbb A^1_{\bar k},\mathcal L_\psi)
\to \mathcal L_{\psi,\bar 0}\to.$$
By \cite[Sommes trig. 2,7]{SGA4.5}, we have 
$$R\Gamma_c(\mathbb A^1_{\bar k}, \mathcal L_\psi)=0,\quad \mathcal L_\psi|_0\cong\overline{\mathbb Q}_\ell.$$ 
So we have $$R\Gamma_c(\mathbb G_{m,\bar k}, \mathcal L_\psi)[2]\cong \overline{\mathbb Q}_\ell[1].$$ Therefore 
$$\mathscr F_\psi(j_!\mathcal L[1])_{1}\cong \overline{\mathbb Q}_\ell[1]\cong G_!(\beta,\psi)[1].$$
Denote by $D$ the Verdier dual functor on $\mathbb A^1$. 
We have 
\begin{eqnarray*}
\mathscr F_\psi(Rj_*\overline{\mathbb Q}_\ell[1])&\cong& 
\mathscr F_\psi(D(j_!\overline{\mathbb Q}_\ell[1]))(-1)\\
&\cong& D(\mathscr F_{\psi^{-1}}(j_!\overline{\mathbb Q}_\ell[1]))(-2).    
\end{eqnarray*}
By \cite[2.3.1.3]{L}, we have $$\mathscr F_{\psi^{-1}}(j_!\overline{\mathbb Q}_\ell[1])|_{\mathbb A^1_k-\{0\}}\cong
\mathcal F[1]$$ for a lisse sheaf $\mathcal F$ on $\mathbb A_k^1-\{0\}$, and by the formula that we have already
proved, we have $\mathcal F|_{1}\cong \overline{\mathbb Q}_\ell$. 
So 
$$\mathscr F_\psi(Rj_*\mathcal L[1])_{1}\cong \mathscr F_\psi(Rj_*\overline{\mathbb Q}_\ell[1])_{1}\cong
\mathrm{Hom}(\mathcal F_{\bar 1}, \overline{\mathbb Q}_\ell)[1](-1)\cong  
\overline{\mathbb Q}_\ell(-1)[1]=G_*(\beta,\psi)[1].$$
We have 
\begin{eqnarray*}
\mathscr F_\psi(Rj_*\mathcal L[1])_{\bar 0}\cong \mathscr F_\psi(Rj_*\overline{\mathbb Q}_\ell)_{\bar 0}
\cong R\Gamma_c(\mathbb A^1_{\bar k}, Rj_*\overline{\mathbb Q}_\ell).
\end{eqnarray*}
Let $J: \mathbb G_{m,k}\hookrightarrow \mathbb P^1_k$
be the open immersion. We have a distinguished triangle
$$\begin{array}{ccccc}
R\Gamma_c(\mathbb A^1_{\bar k}, Rj_*\overline{\mathbb Q}_\ell)\to& 
R\Gamma (\mathbb P^1_{\bar k}, RJ_*\overline{\mathbb Q}_\ell)&\to& R\Gamma(\overline\infty, RJ_*\overline{\mathbb Q}_\ell)&\to \\
&\wr\!\!\parallel&&\wr\!\!\parallel&\\
&R\Gamma (\mathbb G_{m,\bar k}, \overline{\mathbb Q}_\ell)&&(RJ_*\overline{\mathbb Q}_\ell)_{\overline\infty}&
\end{array}$$ 
We claim that 
$$R\Gamma (\mathbb G_{m,\bar k}, \overline{\mathbb Q}_\ell)\cong (RJ_*\overline{\mathbb Q}_\ell)_{\overline\infty}.$$
Therefore $R\Gamma_c(\mathbb A^1_{\bar k}, Rj_*\overline{\mathbb Q}_\ell)=0$ and hence $\mathscr F_\psi(Rj_*\mathcal L[1])_{0}=0.$
To prove the claim, let $\tilde K_\infty$ be the fraction 
field of the strict henselization of $\mathbb G_m$ at $\infty$. We have 
$$(RJ_*\overline{\mathbb Q}_\ell)_{\overline\infty}\cong R\Gamma(\mathrm{Spec}\, \tilde K_\infty, \overline{\mathbb Q}_\ell).$$
It suffices to show 
$$H^i(\mathbb G_{m,\bar k},\mu_n)\cong H^i(\mathrm{Spec}\,
\tilde K_\infty, \mu_n)$$ for all $(n, p)=1$. We compute $H^i(\mathbb G_{m,\bar k},\mu_n)$ and $H^i(\mathrm{Spec}\,
\tilde K_\infty, \mu_n)$ and compare them using the long exact sequences of cohomology groups associated to the
the Kummer short exact sequence.
\end{proof}

\begin{lemma} \label{mainlemma} We have 
\begin{eqnarray*}
\mathscr F_{\psi,\mathbb L_{G/P}}
(j_{\mathbb L_{G/P}^\circ, !}\mathcal L_\beta[1])&\cong& R j_{\mathbb L_{G/P}^{\vee\circ}, \ast}  \mathcal L_\beta[1]
\otimes G_!(\beta,\psi), \\
\mathscr F_{\psi,\mathbb L_{G/P}}
(Rj_{\mathbb L_{G/P}^\circ, \ast}R \mathcal L_\beta[1])&\cong&  j_{\mathbb L_{G/P}^{\vee\circ}, !} \mathcal L_\beta[1]\otimes
G_*(\beta,\psi).
\end{eqnarray*}
\end{lemma}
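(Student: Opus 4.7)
The plan is to reduce each of the two claimed isomorphisms to a fiber computation over $G/P$, where the statement becomes the stalk formula of Lemma \ref{fiber}, and then globalize using $G$-equivariance of both sides.

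First I would verify that both sides of each asserted isomorphism are $G$-equivariant sheaves on $\mathbb L_{G/P}^\vee$ with respect to the contragredient action. Lemma \ref{equivariant} gives the $G$-equivariance of $j_{\mathbb L_{G/P}^\circ,!}\mathcal L_\beta$ and $Rj_{\mathbb L_{G/P}^\circ,\ast}\mathcal L_\beta$ on $\mathbb L_{G/P}$; Lemma \ref{action} combined with Lemma \ref{Kunneth} then transports this equivariance through $\mathscr F_{\psi,\mathbb L_{G/P}}$. By the same argument as in Lemma \ref{equivariant}, applied now to $\mathbb L_{G/P}^\vee$, the right-hand side sheaves $Rj_{\mathbb L_{G/P}^{\vee\circ},\ast}\mathcal L_\beta$ and $j_{\mathbb L_{G/P}^{\vee\circ},!}\mathcal L_\beta$ are likewise $G$-equivariant; the tensor factors $G_!(\beta,\psi)$ and $G_\ast(\beta,\psi)$ are pulled back from $\mathrm{Spec}\,k$ and do not affect equivariance.

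Next, the Fourier transform commutes with any base change (\cite[1.2.2.9]{L}), $j_!\mathcal L_\beta$ commutes with base change tautologically, and $Rj_\ast\mathcal L_\beta$ commutes with base change to $G/P$ by Lemma \ref{basechange}. Hence both sides of the claimed isomorphisms are compatible with passage to any fiber of $\mathbb L_{G/P}^\vee\to G/P$, where each reduces to the Fourier transform on a single line of the extension by zero (respectively the derived direct image) of a multiplicative sheaf on $\mathbb G_m$. Now $G$ acts on $\mathbb L_{G/P}^\vee$ with exactly two orbits: the open orbit $\mathbb L_{G/P}^{\vee\circ}\cong G/Q$ (Lemma \ref{isomorphism}) and the closed zero section $G/P$; the points $v^\ast$ and $0_{[v]}$ lie respectively in these orbits, and Lemma \ref{fiber} identifies the stalks of the LHS at these two points with the stalks of the RHS predicted by the formulas. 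Equivariance then yields pointwise agreement of stalks at every point of $\mathbb L_{G/P}^\vee$.

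The main obstacle is upgrading this stalk-wise agreement to canonical isomorphisms of sheaves. I would construct the comparison maps using the naturality of $j_!\to Rj_\ast$ on both sides combined with the equivariant structures established above: equivariance forces each candidate morphism to be determined by its value at a single geometric point in each orbit, and Lemma \ref{fiber} ensures that value is an isomorphism there. As a consistency check (and as a way to halve the work), the second isomorphism should be deducible from the first by applying Verdier duality via the duality proposition stated earlier in the paper, together with the identity $G_\ast(\beta,\psi)\cong G_!(\beta,\psi)^\vee(-1)$, so in practice it suffices to establish only one of the two formulas directly.
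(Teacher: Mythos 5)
Your high-level strategy matches the paper's: use the equivariance of the Fourier transform under the $G$-action (Lemmas \ref{equivariant}, \ref{action}, \ref{Kunneth}), compute a single stalk via Lemma \ref{fiber}, and reduce one of the two formulas to the other by Verdier duality. However, the step you flag as ``the main obstacle'' --- upgrading stalk-wise agreement to an isomorphism of complexes --- is precisely where your argument has a genuine gap, and it is also where the paper does the real work.

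The paper does not produce a comparison map between the two sides and check it on stalks. Instead, it restricts the equivariance isomorphism $m^{\vee\ast}\mathscr F_\psi(\cdots)\cong\phi^\ast\mathcal L_\beta\boxtimes\mathscr F_\psi(\cdots)$ to the slice $G\times\{v^\ast\}$, which yields an identification of $\phi^\ast$ of the left-hand side with $\phi^\ast(\mathcal L_\beta[1]\otimes G_\ast(\beta,\psi))$ \emph{on $G$}, not on $\mathbb L_{G/P}^{\vee\circ}$. Descending this from $G$ to $\mathbb L_{G/P}^{\vee\circ}\cong G/Q$ requires two nontrivial inputs: (a) $\phi\colon G\to\mathbb L_{G/P}^{\vee\circ}$ is smooth, hence admits sections \'etale-locally, which forces $j^\ast_{\mathbb L_{G/P}^{\vee\circ}}\mathscr F_\psi(\cdots)$ to be a lisse sheaf concentrated in one degree; and (b) the geometric connectedness of $Q$ (the hypothesis of Theorem \ref{mainthm}, which you never invoke) makes the fibers of $\phi$ geometrically connected, so $\pi_1(G)\to\pi_1(\mathbb L_{G/P}^{\vee\circ})$ is surjective and an isomorphism of lisse sheaves after pullback by $\phi$ descends. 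The vanishing on the zero section is then a separate restriction to $G\times\{0_{[v]}\}$. Your proposed alternative --- building a morphism from the natural transformation $j_!\to Rj_\ast$ and arguing that ``equivariance forces each candidate morphism to be determined by its value at a single geometric point in each orbit'' --- does not obviously produce a map in the right direction (why should the Fourier transform of a $j_!$-extension receive a natural map from, or map to, a $j_!$-extension on the dual bundle?), and the determination-by-one-point principle for morphisms of equivariant sheaves is not something you can invoke without an argument of the same flavor as (a) and (b). In short: the fiber computation and equivariance are necessary but not sufficient; the descent via smoothness and $\pi_1$-surjectivity is the missing step.
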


\begin{proof} The first assertion can be deduced from the second one by taking Verdier dual. 
Let's prove the second assertion. 

Let $m:G\times_k \mathbb L_{G/P}\to \mathbb L_{G/P}$ be the action of $G$ on $\mathbb L_{G/P}$, let 
$m^\vee:G\times_k \mathbb L_{G/P}^\vee\to 
\mathbb L_{G/P}^\vee$ be the contragredient action, and let $\phi$ be the composite 
$$G\to G/Q\stackrel{\kappa^\vee}{\underset\cong\to} \mathbb L_{G/P}^{\vee\circ}.$$ 
By Lemmas \ref{equivariant}, \ref{action}, and \ref{Kunneth}, we have 
\begin{eqnarray*}
m^{\vee\ast} \mathscr F_\psi
(Rj_{\mathbb L_{G/P}^\circ, \ast} \mathcal L_\beta[1])&\cong& 
\mathscr F_\psi(m^\ast Rj_{\mathbb L_{G/P}^\circ, \ast} \mathcal L_\beta[1])\\
&\cong& 
\mathscr F_\psi (\mathcal L'_\beta\boxtimes Rj_{\mathbb L_{G/P}^\circ, \ast}\mathcal L_\beta[1])\\
&\cong& \mathcal L'_\beta \boxtimes \mathscr F_\psi(Rj_{\mathbb L_{G/P}^\circ, \ast} \mathcal L_\beta[1])
\\
&\cong& \phi^\ast\mathcal L_\beta \boxtimes \mathscr F_\psi(Rj_{\mathbb L_{G/P}^\circ, \ast} \mathcal L_\beta[1]).
\end{eqnarray*}
Regard $v^\ast$ as a point in the fiber of ${\mathbb L}^{\vee}_{G/P,eP}\cong 
({\mathbb L}^\vee)_{[v]}$. Restricting the isomorphism $$m^{\vee\ast} \mathscr F_\psi
(Rj_{\mathbb L_{G/P}^\circ, \ast}\mathcal L_\beta[1])\cong 
\phi^\ast\mathcal L_\beta \boxtimes \mathscr F_\psi(Rj_{\mathbb L_{G/P}^\circ, \ast} \mathcal L_\beta[1])$$ to $G\times v^\ast$, we get
$$\phi^\ast j^\ast_{\mathbb L_{G/P}^{\vee\circ}}\mathscr F_\psi
(Rj_{\mathbb L_{G/P}^\circ, \ast} \mathcal L_\beta[1])
\cong \phi^\ast \mathcal L_\beta \otimes \mathscr F_\psi(Rj_{\mathbb L_{G/P}^\circ, \ast} \mathcal L_\beta[1])|_{v^\ast}.$$
By Lemma \ref{fiber}, we have
$$\mathscr F_\psi(Rj_{\mathbb L_{G/P}^\circ, \ast} \mathcal L_\beta[1])|_{v^\ast}\cong G_*(\beta,\psi)[1].$$
We thus have 
$$\phi^\ast j^\ast_{\mathbb L_{G/P}^{\vee\circ}}\mathscr F_\psi
(Rj_{\mathbb L_{G/P}^\circ, \ast} \mathcal L_\beta[1])
\cong \phi^\ast(\mathcal L_\beta[1] \otimes G_*(\beta,\psi)).$$
The morphism $\phi$ is smooth. 
Locally with respect to \'etale topology, $\phi$ has sections. The above isomorphism then implies that 
$j^\ast_{\mathbb L_{G/P}^{\vee\circ}}\mathscr F_\psi
(Rj_{\mathbb L_{G/P}^\circ, \ast}\mathcal L_\beta)$ is a lisse sheaf on $\mathbb L_{G/P}^{\vee\circ}$. The fibers of
$\phi$ are geometrically connected. This implies that if $V\to \mathbb L_{G/P}^{\vee\circ}$ is an \'etale covering space such that $V$ is connected, 
then $G\times_{\mathbb L_{G/P}^{\vee\circ}}V$ is also connected. So the homomorphism induced by $\phi$ on fundamental groups 
$\pi_1(G)\to \pi_1(\mathbb L_{G/P}^{\vee\circ})$ is surjective. The 
isomorphism $\phi^\ast j^\ast_{\mathbb L_{G/P}^{\vee\circ}}\mathscr F_\psi
(Rj_{\mathbb L_{G/P}^\circ, \ast}\mathcal L_\beta[1])
\cong \phi^\ast(\mathcal L_\beta[1] \otimes G_*(\beta,\psi))$
thus implies that 
$$j^\ast_{\mathbb L_{G/P}^{\vee\circ}}\mathscr F_\psi
(Rj_{\mathbb L_{G/P}^\circ, \ast}\mathcal L_\beta[1])
\cong \mathcal L_\beta[1] \otimes G_*(\beta,\psi).$$
By Lemma
\ref{fiber}, we have 
$$\mathscr F_\psi(Rj_{\mathbb L_{G/P}^\circ, \ast}\mathcal L_\beta[1])|_{0_{[v]}}=0.$$
Restricting the isomorphism $m^{\vee\ast} \mathscr F_\psi
(Rj_{\mathbb L_{G/P}^\circ, \ast} \mathcal L_\beta[1])\cong 
\phi^\ast\mathcal L_\beta \boxtimes \mathscr F_\psi(Rj_{\mathbb L_{G/P}^\circ, \ast}\mathcal L_\beta[1])$ to $G\times 0_{[v]}$,
we see the restriction of $\mathscr F_\psi
(Rj_{\mathbb L_{G/P}^\circ, \ast} \mathcal L_\beta[1])$ to the zero section of 
${\mathbb L}_{G/P}^{\vee}$ vanishes. It follows that $$\mathscr F_\psi
(Rj_{\mathbb L_{G/P}^\circ, \ast} \mathcal L_\beta[1])
\cong j_{\mathbb L_{G/P}^{\vee\circ}, !}\mathcal L_\beta[1] \otimes G_*(\beta,\psi).$$
This proves our assertion.
\end{proof}

\section{Tautological systems associated to a family of representations}

Let $G$ be an algebraic group $G$ over a perfect field $k$, let
$\rho_\lambda: G\to \mathbb \mathrm{GL}(V_\lambda)$ ($\lambda\in \Lambda)$ be a finite family of representations 
so that the morphism $$\iota: G\to \prod_{\lambda\in \Lambda}\mathrm{End}(V_\lambda), 
\quad g\mapsto (\rho_\lambda(g))$$ is quasi-finite, and let
$$V=\prod_{\lambda\in \Lambda}\mathrm{End}(V_\lambda).$$
We have an action of $G$ on $V$ given by 
$$(g, (A_\lambda)_{\lambda\in\Lambda})\mapsto (\rho_\lambda(g)A_\lambda)_{\lambda\in\Lambda}$$ 
for any points $g$ in $G$ and $(A_\lambda)_{\lambda\in \Lambda}$ in $V$. Take $v$ to be the $k$-point $v=(\mathrm{id}_{V_\lambda})_{\lambda\in\Lambda}$
of $V$. 
Then the connected component of the stabilizer of $v$ is trivial. 
We have a perfect pairing 
$$\langle\;,\;\rangle:
V\times V\to k, \quad ((A_\lambda)_{\lambda\in\Lambda}, (B_\lambda)_{\lambda\in\Lambda})\mapsto \sum_{\lambda\in\Lambda}
\mathrm{Tr}(A_\lambda B_\lambda).$$ 
We identify $V$ with $V^\vee$ through this pairing. 
Let $\mathcal L_\beta$ be a multiplicative sheaf on the maximal abelian quotient $G/[G,G]$ of $G$. By our assumption, $\iota$ is 
both quasi-finite and affine. So $\iota_! \mathcal L_\beta[n]$ and $R\iota_* \mathcal L_\beta[n]$ are perverse sheaves. Hence 
the $\ell$-adic tautological systems $\mathcal T_!(G, \beta, V, v,\psi)=\mathscr F_\psi(\iota_!\mathcal L_\beta[n])$ 
and $\mathcal T_*(G, \beta, V, v,\psi)=\mathscr F_\psi(R\iota_*\mathcal L_\beta[n])$ on $V$ are perverse.  

\begin{lemma} Notation as above. Let $F: G \times_k V \to \mathbb{A}_k^{1}$ be the morphism defined by 
$$F(g,(A_\lambda)_{\lambda\in\Lambda})=\sum_{\lambda\in\Lambda}\mathrm{Tr}(\rho_\lambda(g)A_\lambda),$$ 
and let $$\pi_1: G\times_k V\to G, \quad \pi_2: G\times_k V\to V$$ be the projections.
Then we have  
$$\mathcal T_!(G, \beta, V, v,\psi)\cong 
R\pi_{2!} (\pi_{1}^{\ast}\mathcal L_\beta \otimes F^{\ast}L_{\psi})[n+N],$$ 
where $n=\mathrm{dim}\, G$ and $N=\mathrm{dim}\,V$. 
\end{lemma}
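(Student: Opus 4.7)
The plan is to unpack the definition of $\mathcal T_!(G,\beta,V,v,\psi)$ as a Deligne--Fourier transform and then massage the resulting expression by standard six-functor gymnastics (proper base change and the projection formula), after which the claimed formula drops out. There is no conceptual obstacle; the whole statement is essentially a bookkeeping exercise once one writes down the right Cartesian diagram.

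Concretely, by definition $\mathcal T_!(G,\beta,V,v,\psi)=\mathscr F_\psi(\iota_!\mathcal L_\beta[n])$, and since we have identified $V\cong V^\vee$ via the trace pairing,
$$\mathscr F_\psi(\iota_!\mathcal L_\beta[n])\;=\;R\mathrm{pr}^\vee_!\bigl(\mathrm{pr}^\ast \iota_!\mathcal L_\beta[n]\otimes \langle\,,\,\rangle^\ast\mathcal L_\psi\bigr)[N],$$
where $\mathrm{pr},\mathrm{pr}^\vee\colon V\times_k V\to V$ are the two projections and $\langle\,,\,\rangle\colon V\times_k V\to\mathbb A^1$ is the trace pairing. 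The first step is to introduce $j=\iota\times\mathrm{id}_V\colon G\times_k V\to V\times_k V$ and to form the Cartesian diagram
$$\begin{array}{ccc}
G\times_k V&\stackrel{j}\to& V\times_k V\\
{\scriptstyle \pi_1}\downarrow&&\downarrow{\scriptstyle\mathrm{pr}}\\
G&\stackrel\iota\to& V.
\end{array}$$
Proper base change gives $\mathrm{pr}^\ast \iota_!\mathcal L_\beta\cong j_!\pi_1^\ast\mathcal L_\beta$.

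Plugging this in and applying the projection formula,
$$\mathrm{pr}^\ast \iota_!\mathcal L_\beta\otimes\langle\,,\,\rangle^\ast\mathcal L_\psi\;\cong\;j_!\bigl(\pi_1^\ast\mathcal L_\beta\otimes j^\ast\langle\,,\,\rangle^\ast\mathcal L_\psi\bigr).$$
The second step is to identify $j^\ast\langle\,,\,\rangle^\ast\mathcal L_\psi$: by construction
$\langle\,,\,\rangle\circ j(g,A)=\langle \iota(g),A\rangle=\sum_{\lambda}\mathrm{Tr}(\rho_\lambda(g)A_\lambda)=F(g,A),$
so $j^\ast\langle\,,\,\rangle^\ast\mathcal L_\psi\cong F^\ast\mathcal L_\psi$.

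Finally, since $\mathrm{pr}^\vee\circ j=\pi_2$, composing the derived pushforwards $R\mathrm{pr}^\vee_!\circ Rj_!=R\pi_{2!}$ gives
$$\mathcal T_!(G,\beta,V,v,\psi)\;\cong\;R\pi_{2!}\bigl(\pi_1^\ast\mathcal L_\beta\otimes F^\ast\mathcal L_\psi\bigr)[n+N],$$
which is the asserted formula (note that the shift $[n]$ coming from the tautological definition and the shift $[N]$ from the Fourier transform combine to $[n+N]$). The only subtle point one should verify is the coherence of the identification $V\cong V^\vee$ with the pairing used in defining $\mathscr F_\psi$, but under the trace pairing this is exactly what produces the function $F$, so the proof is complete.
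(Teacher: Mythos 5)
Your proof is correct and follows essentially the same route as the paper's: unpack the definition of $\mathscr F_\psi(\iota_!\mathcal L_\beta[n])$, use the Cartesian square with $\iota\times\mathrm{id}_V$ and proper base change to rewrite $\mathrm{pr}^\ast\iota_!\mathcal L_\beta$, apply the projection formula, identify $\langle\,,\,\rangle\circ(\iota\times\mathrm{id}_V)=F$, and collapse $R\mathrm{pr}^\vee_!\circ R(\iota\times\mathrm{id}_V)_!$ to $R\pi_{2!}$. The shift bookkeeping $[n]+[N]=[n+N]$ also matches.
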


\begin{proof} We have a commutative diagram
$$\begin{array}{rclcc}
G \times_k V&\stackrel{\iota\times{\mathrm{id_V}}}\to& V\times_k V&\stackrel{\langle\;,\;\rangle}\to& \mathbb A^1\\
{\scriptstyle \pi_1}\downarrow&&\downarrow{\scriptstyle \mathrm{pr}_1}&&\\
G&\stackrel{\iota}\to & V.&&
\end{array}$$
By the proper base change theorem and the projection formula, we have
\begin{eqnarray*}
\mathcal T_!(G, \beta, V, v,\psi)
 &\cong & R\mathrm{pr}_{2!}\Big(\mathrm{pr}_{1}^{\ast}\iota_{!}\mathcal L_\beta \otimes \langle\;,\;\rangle^{\ast}\mathcal L_{\psi}\Big) [n+N] \\
  &\cong& R \mathrm{pr}_{2!}\Big((\iota\times \mathrm{id}_V)_{!}\pi_{1}^{\ast}\mathcal L_\beta \otimes  \langle\;,\;\rangle^{\ast}\mathcal L_{\psi}\Big) [n+N]\\
  &\cong&R\big( \mathrm{pr}_{2}\circ (\iota\times\mathrm{id}_V)\big)_{!}\Big(\pi_{1}^{\ast}\mathcal L_\beta \otimes (\langle\;,\;\rangle\circ 
  \big(\iota \times \mathrm{id}_V)\big)^{\ast}\mathcal L_{\psi}\Big)[n+N] \\
  &\cong&  R\pi_{2!} (\pi_{1}^{\ast}\mathcal L_{\beta} \otimes F^{\ast}\mathcal L_{\psi})[n+N].
\end{eqnarray*}
\end{proof}

We call $$\mathrm{Hyp}_{\psi}(\Lambda, \beta)=
R\pi_{2!} (\pi_{1}^{\ast}\mathcal L_\beta \otimes F^{\ast}L_{\psi})[n+N]$$ 
the \emph{hypergeometric sheaf} on $V$. 
Over the complex number field $\mathbb C$ and in the case where $G$ is a reductive group and $\Lambda$ is a finite family 
of irreducible representations of $G$, this is introduced in \cite{K}. The GKZ hypergeometric 
sheaf in Example \ref{GKZ} is a special case for the group $G=\mathbb G_m^n$ and the family of representations defined by the 
character 
$$\mathbb G_m^n\to \mathbb G_m, \quad (t_1, \ldots, t_n)\mapsto t_1^{w_{1j}}\cdots t_n^{w_{nj}}\quad (j=1, \ldots, N).$$
Suppose $k=\mathbb F_q$ is a finite field, and suppose $\mathcal L_\beta$ 
is the Lang sheaf associated to a character 
$$\beta: (G/[G,G])(\mathbb F_q)\to \overline{\mathbb Q}_\ell^*.$$
Then for any $\mathbb F_q$-point $A=(A_{\lambda})_{\lambda\in\Lambda}$ of $V$, we have 
$$\mathrm{Tr}(\mathrm{Frob}_A, \mathrm{Hyp}_{\psi}(\beta, \Lambda)_{\bar A})=(-1)^{n+N}
\sum_{g\in G(\mathbb F_q)} \beta(g) \psi \Big( \mathrm{Tr}_{\mathbb F_q/\mathbb F_p}\Big(\sum_{\lambda\in \Lambda} 
\mathrm{Tr}(\rho_\lambda(g)A_\lambda)\Big)\Big)$$
by the Grothendieck trace formula. 

\medskip
\noindent{\bf Example 1}. Let $G\subset \mathrm{GL}_m$ and let $\Lambda$ be the set consisting of only the standard representation
of $\rho: G\hookrightarrow \mathrm{GL}_m$. Determinant on $\mathrm{GL}_m$ induces a homomorphism 
$$G/[G,G]\to \mathbb G_m.$$ 
Suppose $k=\mathbb F_q$ is a finite field, and let $\beta:\mathbb G_m(\mathbb F_q)\to\overline{\mathbb Q}_\ell^*$ be a multiplicative character.  
For any rational point $A=(a_{ij})$ in $\mathfrak{gl}_m$, where $(a_{ij})$ is an $m\times m$ 
matrix with entries in $\mathbb F_q$, we have 
$$\mathrm{Tr}(\mathrm{Frob}_A, \mathrm{Hyp}_{\psi}(\beta,\Lambda)_{\bar A})=
\sum_{(g_{ij})\in G(\mathbb F_q)} \beta(\mathrm{det}\big(g_{ij})\big) \psi \Big( \mathrm{Tr}_{\mathbb F_q/\mathbb F_p}(\sum_{i,j}^m
g_{ij} a_{ji})\Big).$$

\noindent{\bf Example 2} (\cite[Example 6.1]{K}). 
Let $L$ and $N$ be two 1-dimensional $k$-vector spaces, and let $V$ be an $n$-dimensional
$k$-vector space. Set
$$G = \mathrm{GL}(N) \times_k \mathrm{GL}(L) \times_k \mathrm{GL}(V)
 \cong \mathbb{G}_m\times_k\mathbb G_m \times_k \mathrm{GL}_{n}.$$
Let $\Lambda=\{N\otimes_k N, N \otimes_k L, N \otimes_k V, L\otimes_k V\}$. In this
case, we have 
$$V = \mathrm{End}(N\otimes_k N)
  \oplus \mathrm{End}(N\otimes_k L)
  \oplus \mathrm{End}(N\otimes_k V)
  \oplus \mathrm{End}(L\otimes_k V)$$ which is of dimension $2(1+n^2)$. 
The stabilizer in $G$ of the point  $v=(\mathrm{id}_{N\otimes_k N}, \mathrm{id}_{N\otimes_k L}, \mathrm{id}_{N\otimes_k V}, 
\mathrm{id}_{L\otimes_k V})$ in $V$ is trivial. Let $\iota: G\to V$ be the morphism
$$\iota: \mathbb G_m\times_k\mathbb G_m\times_k\mathrm{GL}_n\to \mathbb A^1\times_k\mathbb A^1\times_k\mathfrak {gl}_n
\times_k\mathfrak {gl}_n, \quad 
(s, t, g)\mapsto (s^2, st, sg, tg).$$ Its image can be identified with the orbit of $v$. 
Note that $\bar k$-points in the image of $\iota$ can be described by 
$$\{(u, v, X, Y)\in \bar k\times \bar k\times \mathfrak {gl}_n(\bar k)\times 
\mathfrak {gl}_n(\bar k):\; 
 uY - vX = 0\}.$$
We have $$G/[G, G]\cong  \mathbb{G}_m\times_k\mathbb G_m \times_k \mathrm{GL}_{n}/[\mathrm{GL}_{n}, \mathrm{GL}_{n}]\cong
\mathbb{G}_m\times_k\mathbb G_m \times_k\mathbb{G}_m.$$
Suppose $k=\mathbb F_q$ is a finite field with $q$ elements. Let $\chi_1, \chi_2, \chi_3: \mathbb F_q^*\to \overline{\mathbb Q}_\ell^*$ 
be multiplicative characters and let $\beta: (G/[G, G])(\mathbb F_q)\to \overline{\mathbb Q}_\ell^*$ be the character
$$(s, t, g)\mapsto \chi_1(s)\chi_2(g)\chi_3(\mathrm{det}(g)).$$ For any $\mathbb F_q$-points $x=(a, b, C, D)$ of $V$, we have 
$$\mathrm{Tr}(\mathrm{Frob}_x, (\tau_!(G, \beta, V, v,\psi)_{\bar x})=\sum_{s\in\mathbb F_q^*, \,t\in\mathbb F_q^*,\, g\in\mathrm{GL}_n(\mathbb F_q)}
\chi_1(s)\chi_2(t)\chi_3(\mathrm{det}(g)) \psi\Big(s^2a+st b+ s\mathrm{Tr}(gC)+t\mathrm{Tr}(gD)\Big).$$

\end{document}